\RequirePackage{fix-cm}
\documentclass[11pt, reqno]{amsart}
\usepackage{amsmath}
\usepackage{setspace}
\usepackage{fancyhdr}
\usepackage{mathrsfs}
\usepackage{xifthen}
\usepackage{verbatim}
\usepackage{hyperref}
\usepackage[all]{xcolor}

\usepackage{float}

\usepackage{comment}

\topmargin -0.0in
\textheight 10.5in
\oddsidemargin -0.0in
\evensidemargin -0.0in
\headheight = 12pt
\headsep = 25pt
\textwidth 6.15in
\pagestyle{fancy}

\usepackage{geometry}
\geometry{verbose,tmargin=2.9cm,bmargin=2.6cm,lmargin=3cm,rmargin=3cm,headheight=0.3cm,headsep=0.3cm,footskip=1cm, textwidth=15cm}
\usepackage{fancyhdr}
\pagestyle{headings}
\setlength{\parskip}{\smallskipamount}

\usepackage{textcomp}
\usepackage{amsthm}
\usepackage{amstext}
\usepackage{amsfonts}
\usepackage{amssymb}
\usepackage{fixltx2e}
\usepackage{graphicx}

\usepackage{tikz}
\usepackage{tikz-cd}
\usepackage{caption}
\usetikzlibrary{calc,backgrounds}
\usetikzlibrary{matrix,arrows,decorations.pathmorphing}

\usepackage{bigstrut}

\makeatletter

\definecolor{mynicegreen}{RGB}{52,194,48}
\definecolor{mydarkteal}{RGB}{0,102,102}

\newcommand{\schff}{X}
\newcommand{\adicff}{\mathcal{X}}
\newcommand{\Spa}{\mathrm{Spa}}

\newcommand{\HN}{\mathrm{HN}}

\newcommand{\rk}{\mathrm{rk}}

\newcommand{\inj}{\hookrightarrow}
\newcommand{\surj}{\twoheadrightarrow}

\newcommand{\mumax}{\mu_\text{max}}
\newcommand{\mumin}{\mu_\text{min}}
\newcommand{\strsheaf}[1][]{\Ocal_{#1}}
\newcommand{\trivbundle}{\strsheaf}

\newcommand{\genred}{\overline}
\newcommand{\gensecred}[1]{{#1}'}
\newcommand{\genthirdred}[1]{{#1}''}

\newcommand{\genpolygon}{\mathscr{P}}
\newcommand{\gensecpolygon}{\mathscr{Q}}
\newcommand{\HNslope}[2]{\mu_{#2}(#1)}

\newcommand{\gensubvb}{\Dcal}
\newcommand{\genquotvb}{\Fcal}
\newcommand{\genextvb}{\Ecal}
\newcommand{\genvb}{\Vcal}
\newcommand{\gensecvb}{\Wcal}
\newcommand{\auxsubvb}{\Kcal}

\newcommand{\uniformizer}{\pi}
\newcommand{\pseudounif}{\varpi}
\newcommand{\genlocfield}{E}
\newcommand{\finext}{E}
\newcommand{\FFfield}{F}
\newcommand{\algclosedperfdfield}{F}

\newcommand{\integerring}[1]{\strsheaf[#1]}
\newcommand{\witt}{W}
\newcommand{\genfrob}{\phi}

\newcommand{\numslope}{r}
\newcommand{\auxindex}{l}

\numberwithin{equation}{section}


\usepackage{color}




\newcommand{\F}{\mathbb{F}}
\newcommand{\Q}{\mathbb{Q}}
\newcommand{\Z}{\mathbb{Z}}

\newcommand{\PP}{\mathbb{P}}

\DeclareMathOperator{\GL}{GL}

\DeclareMathOperator{\coker}{coker}

\DeclareMathOperator{\Spec}{Spec\,}

\DeclareMathOperator{\Ext}{Ext}
\DeclareMathOperator{\Hom}{Hom}
\DeclareMathOperator{\Pic}{Pic}

\DeclareMathOperator{\Proj}{Proj\,}

\DeclareMathOperator{\dR}{dR}

\DeclareFontFamily{OT1}{rsfs}{}
\DeclareFontShape{OT1}{rsfs}{n}{it}{<-> rsfs10}{}
\DeclareMathAlphabet{\mathscr}{OT1}{rsfs}{n}{it}

\newcommand{\Dcal}{\mathcal{D}}
\newcommand{\Ecal}{\mathcal{E}}
\newcommand{\Fcal}{\mathcal{F}}

\newcommand{\Kcal}{\mathcal{K}}

\newcommand{\Ocal}{\mathcal{O}}

\newcommand{\Vcal}{\mathcal{V}}
\newcommand{\Wcal}{\mathcal{W}}

\newcommand{\Ycal}{\mathcal{Y}}

\newtheorem{lemma}[subsection]{Lemma}
\newtheorem{prop}[subsection]{Proposition}

\theoremstyle{remark}
\newtheorem*{remark}{Remark}
\newtheorem{defn}[subsection]{Definition}
\newtheorem{example}[subsection]{Example}

\newtheorem*{thm*}{Theorem}
\makeatletter
\def\th@remark{%
  \thm@headfont{\bfseries}%
  \normalfont 
}
\def\imod#1{\allowbreak\mkern5mu({\operator@font mod}\,\,#1)}
\makeatother

\widowpenalty=1000
\clubpenalty=1000

\interfootnotelinepenalty=10000

\usepackage{enumitem}		
\theoremstyle{theorem}
\newtheorem{theorem}[subsection]{Theorem}

\numberwithin{equation}{section}

\setcounter{tocdepth}{2}

\makeatother

\begin{document}
	
	\tikzset{
		node style sp/.style={draw,circle,minimum size=\myunit},
		node style ge/.style={circle,minimum size=\myunit},
		arrow style mul/.style={draw,sloped,midway,fill=white},
		arrow style plus/.style={midway,sloped,fill=white},
	}
    
	\title{Extensions of vector bundles on the Fargues-Fontaine curve II}
   
    \author[S. Hong]{Serin Hong}
    \address{Department of Mathematics, University of Arizona, 617 N Santa Rita Ave, Tucson, AZ 85721}
    \email{serinh@math.arizona.edu}
    
    \begin{abstract} Given two arbitrary vector bundles on the Fargues-Fontaine curve, we completely classify all vector bundles which arise as their extensions. 

    \end{abstract}
	
	\maketitle

	\tableofcontents
	
	\rhead{}

	\chead{}
\section{Introduction}

The \emph{Fargues-Fontaine curve} is a geometric object that plays a fundamental role in many parts of arithmetic geometry. It was originally constructed by Fargues-Fontaine \cite{FF_curve} as a main tool for a geometric formulation of classical $p$-adic Hodge theory. The work of Scholze-Weinstein \cite{SW_berkeley} then used it to develop new technical frameworks for $p$-adic geometry, including local Shimura varieties and the $B_{\dR}^+$-Grassmanninans. Recently, the seminal work of Fargues-Scholze \cite{FS_geomLL} established a semi-simplified local Langlands correspondence for a general $p$-adic group $G$ in terms of the stack of $G$-bundles on the Fargues-Fontaine curve.

In this article, we investigate the question of classifying all vector bundles on the Fargues-Fontaine curve that arise as extensions of two given vector bundles. This question naturally arises in the study of $p$-adic flag varieties and the $B_{\dR}^+$-Grassmanninans, as in the work of Viehmann \cite{Viehmann_weakadmlocNewton} and Chen-Tong \cite{CT_weakaddandnewton}. 
Our main result gives a complete answer to this question, extending the previous work of the author \cite{Hong_extvbss}. In addition, our main result leads to a classification of all nonempty Newton strata in minuscule $p$-adic flag varieties for $\GL_n$ by the subsequent work of the author \cite{Hong_generalnewtonstrata}. We hope that our result will further lead to a concise description of how the Harder-Narasimhan strata and the Newton strata on the $B_{\dR}^+$-Grassmanninans intersect, building upon the work of Shen \cite{Shen_HNstrata}, Viehmann \cite{Viehmann_weakadmlocNewton}, and Nguyen-Viehmann \cite{NV_HNstrata}.

For a precise statement of our main result, we set up some basic notations and collect some fundamental facts about the Fargues-Fontaine curve. Let $\genlocfield$ be a nonarchimedean local field with finite residue field $\F_q$, where $q$ is a power of a fixed prime number $p$, and let $\FFfield$ an algebraically closed nonarchimedean complete field of characteristic $p$. We denote by $\schff_{\genlocfield, \FFfield}$ the Fargues-Fontaine curve associated to the pair $(\genlocfield, \FFfield)$, which is a regular noetherian scheme over $\genlocfield$ of Krull dimension $1$. The Picard group of $\schff_{\genlocfield, \FFfield}$ is naturally isomorphic to $\Z$, and thus yields 
a good Harder-Narasimhan formalism for vector bundles on $\schff_{\genlocfield, \FFfield}$. The main result of Fargues-Fontaine \cite{FF_curve} states that 
the Harder-Narasimhan filtration of an arbitrary vector bundle $\genvb$ on $\schff_{\genlocfield, \FFfield}$ splits. In other words, the isomorphism class of every vector bundle $\genvb$ on $\schff_{\genlocfield, \FFfield}$ is determined by its Harder-Narasimhan polygon $\HN(\genvb)$. We regard 
$\HN(\genvb)$ as (the graph of) a concave piecewise linear function with the left endpoint at the origin, and denote by $\HNslope{\HN(\genvb)}{i}$ the slope of $\HN(\genvb)$ on the interval $[i-1, i]$ for each integer $i > 0$. 

\begin{theorem}\label{classification of extensions, semistable case intro}
Let $\gensubvb$, $\genextvb$ and $\genquotvb$ be vector bundles on $\schff_{\genlocfield, \FFfield}$ 
such that $\gensubvb$ or $\genquotvb$ is semistable. 
There exists a short exact sequence 
\[ 0 \longrightarrow \gensubvb \longrightarrow \genextvb \longrightarrow \genquotvb \longrightarrow 0\]
if and only if the line segments of $\HN(\gensubvb \oplus \genquotvb)$ can be rearranged so that the resulting (possibly non-concave) polygon $\genpolygon$ satisfies the following properties:
\begin{enumerate}[label=(\roman*)]
\item $\genpolygon$ lies above $\HN(\genextvb)$ with the same endpoints. 
\smallskip

\item 
For $i = 1, \cdots, rank(\genextvb)$, the polygon $\genpolygon$ has a constant slope $\HNslope{\genpolygon}{i}$ on $[i-1, i]$ with
\smallskip
\begin{itemize}
\item $\HNslope{\genpolygon}{i} < \HNslope{\HN(\genextvb)}{i}$ only if $\HNslope{\genpolygon}{i}$ occurs as a slope in $\HN(\gensubvb)$, 
\smallskip

\item $\HNslope{\genpolygon}{i} > \HNslope{\HN(\genextvb)}{i}$ only if $\HNslope{\genpolygon}{i}$ occurs as a slope in $\HN(\genquotvb)$.
\end{itemize}
\end{enumerate}
\end{theorem}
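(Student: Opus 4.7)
The plan is to treat necessity and sufficiency separately. By the Serre-type duality on $\schff_{\genlocfield,\FFfield}$ (which negates all slopes and interchanges the roles of sub and quotient), I may assume without loss of generality that $\gensubvb$ is the semistable one, with common slope $\mu$. Throughout I use the Fargues-Fontaine splitting theorem to decompose $\genextvb = \bigoplus_k \strsheaf(\lambda_k)^{r_k}$ and $\genquotvb = \bigoplus_j \strsheaf(\nu_j)^{n_j}$, together with the fundamental dichotomy that $\Hom(\strsheaf(a),\strsheaf(b)) = 0$ and $\Ext^1(\strsheaf(a),\strsheaf(b)) \neq 0$ precisely when $a > b$.

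For necessity, given the short exact sequence I inspect how $\gensubvb$ meets each HN component of $\genextvb$. The $\Hom$-vanishing forces the composite $\gensubvb \to \bigoplus_{\lambda_k < \mu} \strsheaf(\lambda_k)^{r_k}$ to vanish, so $\gensubvb$ factors through the slope-$\geq \mu$ part of $\genextvb$. Dually, for each $k$ the composite $\strsheaf(\lambda_k)^{r_k} \hookrightarrow \genextvb \twoheadrightarrow \strsheaf(\nu_j)^{n_j}$ vanishes whenever $\lambda_k < \nu_j$, so slopes $> \mu$ in $\genextvb$ can only project onto slopes $\leq \lambda_k$ of $\genquotvb$. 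Tracking these constraints position by position produces a rearrangement $\genpolygon$ of $\HN(\gensubvb \oplus \genquotvb)$; condition (i) then reduces to the classical fact that HN polygons grow under extensions, and condition (ii) records the matching constraints just described.

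For sufficiency I proceed by induction on the number of HN pieces of $\genquotvb$ of slope distinct from $\mu$. The base case where all slopes of $\genquotvb$ equal $\mu$ reduces to a statement about extensions between isotypic semistable bundles, handled by the companion case from Part~I together with the explicit parametrization of $\Ext^1(\strsheaf(\nu),\strsheaf(\mu))$. For the inductive step, I split off the steepest HN piece $\strsheaf(\nu_{\max})^{n_{\max}}$, writing $\genquotvb = \strsheaf(\nu_{\max})^{n_{\max}} \oplus \genquotvb'$; the conditions on $\genpolygon$ restrict to a compatible pair of polygon conditions, one for a smaller extension problem of $\genquotvb'$ by $\gensubvb$, and one for a subsequent extension $0 \to \genextvb' \to \genextvb \to \strsheaf(\nu_{\max})^{n_{\max}} \to 0$. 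Applying induction produces $\genextvb'$, after which I must construct this latter extension with the prescribed HN polygon by choosing an appropriate class in $\Ext^1(\strsheaf(\nu_{\max})^{n_{\max}}, \genextvb')$.

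I expect the principal obstacle to be precisely this last step: among all extension classes one must select one whose resulting HN polygon matches the target, not merely one that exists. This amounts to an auxiliary existence statement about modifications of a given bundle by a single isotypic semistable bundle, which in turn will require a hands-on analysis of how a non-split extension modifies the HN polygon of the direct sum in terms of the vanishing behavior of its $\Ext^1$-class. The combinatorial compatibility of conditions (i) and (ii) with the slopes of $\genextvb'$ should translate directly into the realizability of such a class, thereby closing the induction.
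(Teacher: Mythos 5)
Your proposal captures some of the right high-level ideas---the duality reduction, the use of the Fargues--Fontaine classification, and an induction that peels off one HN component of $\genquotvb$ at a time---but it is incomplete on both directions, and it contains a sign error.

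On the \emph{necessity} side, the sketch is too thin to constitute a proof. The second $\Hom$-vanishing claim is backwards: by Proposition \ref{basic properties of stable bundles}\ref{vanishing hom between stable bundles}, $\Hom(\trivbundle[\genlocfield,\FFfield](\lambda_k), \trivbundle[\genlocfield,\FFfield](\nu_j))$ vanishes when $\lambda_k > \nu_j$, not when $\lambda_k < \nu_j$, so your conclusion about which slopes a high-slope piece of $\genextvb$ can hit in $\genquotvb$ is also reversed. More importantly, ``tracking these constraints position by position produces a rearrangement $\genpolygon$'' is not an argument: you must actually exhibit a permutation dominating $\HN(\genextvb)$ and verify the two side conditions. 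The paper does this via a genuine induction on $\rank(\genextvb)$---splitting off a stable line of minimal slope, distinguishing the cases $\rank(\genred{\genquotvb}) = \rank(\genquotvb)$ and $\rank(\genred{\genquotvb}) = \rank(\genquotvb)-1$, and using the snake lemma plus Lemma \ref{necessary condition for HN polygon of subsheaves and quotients}---and it first reduces to the \emph{integer-slope} case and then pulls back along the finite unramified cover $\schff_{\genlocfield',\FFfield} \to \schff_{\genlocfield,\FFfield}$ to handle the general case. None of this machinery is visible in your sketch, and it is unclear how you would close the argument without it. Your sorting-and-tracking heuristic essentially relies on something like Lemma \ref{sorted suitable permutation}, but that lemma is itself a nontrivial statement requiring proof.

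On the \emph{sufficiency} side, the real problem is that you split off the \emph{maximal}-slope piece $\trivbundle[\genlocfield,\FFfield](\nu_{\max})^{n_{\max}}$ and then acknowledge that you cannot complete the final step: constructing an extension $0 \to \genextvb' \to \genextvb \to \trivbundle[\genlocfield,\FFfield](\nu_{\max})^{n_{\max}} \to 0$ with the prescribed polygon ``will require a hands-on analysis \dots which \dots should translate directly into the realizability of such a class.'' That is exactly the gap. The paper sidesteps it by splitting off the \emph{minimal}-slope piece $\genquotvb_r$ instead, which reverses the slope dominance so that the known result \cite[Theorem 1.1.2]{Arizona_extvb} (Proposition \ref{classification of extensions, semistable kernel and cokernel}, requiring $\mu$ of the sub $\le \mu$ of the quotient, both semistable) becomes applicable. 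Concretely, the paper constructs $\genred{\genextvb}$ explicitly so that $\HN(\genred{\genextvb} \oplus \genquotvb_r)$ is the upper convex hull of $\HN(\genextvb^{>\mu(\genquotvb_r)} \oplus \genquotvb_r)$ and $\HN(\genextvb)$, factors out the maximal common direct summand to reduce to a purely semistable-by-semistable extension problem, and only then invokes \cite{Arizona_extvb}. There is also a separate case $\mu(\gensubvb) > \mu(\genquotvb_r)$, handled by Lemma \ref{minimum slopes of extension-permutations}, where $\genquotvb_r$ is forced to split off $\genextvb$ directly; this case does not appear in your sketch either. Peeling from the top of $\genquotvb$ instead of the bottom really does make the residual extension problem harder, because the quotient then has the \emph{largest} slope, which is exactly the regime where neither $\Hom$- nor $\Ext^1$-vanishing works in your favor.

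In summary, your outline is a plausible first-pass plan but omits the two technical devices that make the paper's proof go through: the pullback-to-integer-slopes reduction for necessity, and the minimal-slope peeling together with the upper-convex-hull construction and \cite{Arizona_extvb} for sufficiency. As written, the proposal is not a proof but an announcement of where the difficulties lie.
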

\begin{figure}[H]
\begin{tikzpicture}[scale=0.5]	
		\pgfmathsetmacro{\eonex}{2}
		\pgfmathsetmacro{\eoney}{3}
		\pgfmathsetmacro{\etwox}{3.5}
		\pgfmathsetmacro{\etwoy}{1}
		\pgfmathsetmacro{\ethreex}{4.5}
		\pgfmathsetmacro{\ethreey}{-2.5}
		
		\pgfmathsetmacro{\fonex}{1.5}		
		\pgfmathsetmacro{\foney}{5}		
		\pgfmathsetmacro{\ftwox}{2.5}
		\pgfmathsetmacro{\ftwoy}{2.5}
		\pgfmathsetmacro{\fthreex}{2}
		\pgfmathsetmacro{\fthreey}{0}
		\pgfmathsetmacro{\ffourx}{1.5}
		\pgfmathsetmacro{\ffoury}{-0.7}
		
		\pgfmathsetmacro{\dslope}{(\foney+\ftwoy+\fthreey+\ffoury-\eoney-\etwoy-\ethreey)/(\fonex+\ftwox+\fthreex+\ffourx-\eonex-\etwox-\ethreex)}
				
		\coordinate (left) at (0, 0);
		\coordinate (p1) at (\eonex, \eoney);
		\coordinate (p2) at (\eonex+\etwox, \eoney+\etwoy);
		\coordinate (right) at (\eonex+\etwox+\ethreex, \eoney+\etwoy+\ethreey);

		\coordinate (q1) at (\fonex, \foney);
		\coordinate (q2) at (\fonex+\ftwox, \foney+\ftwoy);
		\coordinate (q3) at (\fonex+\ftwox+\fthreex, \foney+\ftwoy+\fthreey);
		\coordinate (q4) at (\fonex+\ftwox+\fthreex+\ffourx, \foney+\ftwoy+\fthreey+\ffoury);

		\draw[step=1cm,thick, color=mynicegreen] (left) -- (p1) --  (p2) -- (right);
		\draw[step=1cm,thick, color=red] (left) -- (q1) --  (q2) -- (q3) -- (q4);
		\draw[step=1cm,thick, color=blue] (q4) -- (right);
		
		\draw [fill] (left) circle [radius=0.05];
		\draw [fill] (right) circle [radius=0.05];		
		
		\draw [fill] (p1) circle [radius=0.05];		
		\draw [fill] (p2) circle [radius=0.05];	

		\draw [fill] (q1) circle [radius=0.05];		
		\draw [fill] (q2) circle [radius=0.05];	
		\draw [fill] (q3) circle [radius=0.05];	
		\draw [fill] (q4) circle [radius=0.05];

		\pgfmathsetmacro{\vdx}{(\fonex+\ftwox+\fthreex+\ffourx+\eonex+\etwox+\ethreex)*0.5}	
		\pgfmathsetmacro{\vdy}{(\foney+\ftwoy+\fthreey+\ffoury+\eoney+\etwoy+\ethreey)*0.5}	

		\path (\vdx, \vdy) ++(0.5, 0.2) node {\color{blue}$\gensubvb$};
		\path (p2) ++(0.0, -0.6) node {\color{mynicegreen}$\genextvb$};
		\path (q1) ++(-0.5, 0.2) node {\color{red}$\genquotvb$};
\end{tikzpicture}
\hspace{0.3cm}
\begin{tikzpicture}[scale=0.4]
        \pgfmathsetmacro{\textycoordinate}{5}
		\draw[->, line width=0.6pt] (0, \textycoordinate) -- (1.5,\textycoordinate);
		\draw (0,0) circle [radius=0.00];	
\end{tikzpicture}
\hspace{0.3cm}
\begin{tikzpicture}[scale=0.5]		
		\pgfmathsetmacro{\eonex}{2}
		\pgfmathsetmacro{\eoney}{3}
		\pgfmathsetmacro{\etwox}{3.5}
		\pgfmathsetmacro{\etwoy}{1}
		\pgfmathsetmacro{\ethreex}{4.5}
		\pgfmathsetmacro{\ethreey}{-2.5}
		
		\pgfmathsetmacro{\fonex}{1.5}		
		\pgfmathsetmacro{\foney}{5}		
		\pgfmathsetmacro{\ftwox}{2.5}
		\pgfmathsetmacro{\ftwoy}{2.5}
		\pgfmathsetmacro{\fthreex}{2}
		\pgfmathsetmacro{\fthreey}{0}
		\pgfmathsetmacro{\ffourx}{1.5}
		\pgfmathsetmacro{\ffoury}{-0.7}
		
		\pgfmathsetmacro{\dslope}{(\foney+\ftwoy+\fthreey+\ffoury-\eoney-\etwoy-\ethreey)/(\fonex+\ftwox+\fthreex+\ffourx-\eonex-\etwox-\ethreex)}

		\pgfmathsetmacro{\ptwoy}{\foney+\dslope*(\eonex-\fonex)}	
		\pgfmathsetmacro{\pthreey}{\ptwoy+\ftwoy+\dslope*(\etwox-\ftwox)}
				
		\coordinate (left) at (0, 0);
		\coordinate (p1) at (\eonex, \eoney);
		\coordinate (p2) at (\eonex+\etwox, \eoney+\etwoy);
		\coordinate (right) at (\eonex+\etwox+\ethreex, \eoney+\etwoy+\ethreey);

		\coordinate (q1) at (\fonex, \foney);
		\coordinate (q2) at (\fonex+\ftwox, \foney+\ftwoy);
		\coordinate (q3) at (\fonex+\ftwox+\fthreex, \foney+\ftwoy+\fthreey);
		\coordinate (q4) at (\fonex+\ftwox+\fthreex+\ffourx, \foney+\ftwoy+\fthreey+\ffoury);
		
		\coordinate (s1) at (q1);
		\coordinate (s2) at (\eonex, \ptwoy);
		\coordinate (s3) at (\eonex+\ftwox, \ptwoy+\ftwoy);
		\coordinate (s4) at (\eonex+\etwox, \pthreey);
		\coordinate (s5) at (\eonex+\etwox+\fthreex, \pthreey+\fthreey);
		\coordinate (s6) at (\eonex+\etwox+\fthreex+\ffourx, \pthreey+\fthreey+\ffoury);
		
		\draw[step=1cm,thick, color=mynicegreen] (left) -- (p1) --  (p2) -- (right);		
		\draw[step=1cm,thick, color=red] (left) -- (q1);
		\draw[step=1cm,thick,dotted, color=red] (q1) -- (q2) -- (q3) -- (q4);
		\draw[step=1cm,thick,dotted, color=blue] (q4) -- (s6);
		\draw[step=1cm,thick, color=blue] (s1) -- (s2);
		\draw[step=1cm,thick, color=red] (s2) -- (s3);
		\draw[step=1cm,thick, color=blue] (s3) -- (s4);
		\draw[step=1cm,thick, color=red] (s4) -- (s5) -- (s6);
		\draw[step=1cm,thick, color=blue] (s6) -- (right);
		
		\draw [fill] (left) circle [radius=0.05];
		\draw [fill] (right) circle [radius=0.05];		
		
		\draw [fill] (p1) circle [radius=0.05];		
		\draw [fill] (p2) circle [radius=0.05];	

		\draw [fill] (q1) circle [radius=0.05];		
		\draw [fill] (q2) circle [radius=0.05];	
		\draw [fill] (q3) circle [radius=0.05];	
		\draw [fill] (q4) circle [radius=0.05];	
				
		\draw [fill] (s1) circle [radius=0.05];		
		\draw [fill] (s2) circle [radius=0.05];		
		\draw [fill] (s3) circle [radius=0.05];		
		\draw [fill] (s4) circle [radius=0.05];			
		\draw [fill] (s5) circle [radius=0.05];		
		\draw [fill] (s6) circle [radius=0.05];	
		
		\path (s3) ++(1.1, -1.0) node {\color{purple}$\genpolygon$};
\end{tikzpicture}
\caption{Illustration of the conditions in Theorem \ref{classification of extensions, semistable case intro}}
\end{figure}

\begin{theorem}\label{classification of extensions, general case intro}
Let $\gensubvb$, $\genextvb$ and $\genquotvb$ be vector bundles on $\schff_{\genlocfield, \FFfield}$. 
There exists a short exact sequence 
\[
0 \longrightarrow \gensubvb \longrightarrow \genextvb \longrightarrow \genquotvb \longrightarrow 0
\]
if and only if 
$\genextvb$ admits a filtration
\[ \gensubvb = \genextvb_0 \subset \genextvb_1 \subset \cdots \subset \genextvb_r = \genextvb\]
such that the induced filtration
\[ 0 = \genextvb_0/\gensubvb \subset \genextvb_1/\gensubvb \subset \cdots \subset \genextvb_r/\gensubvb = \genextvb/\gensubvb\]
coincides with the Harder-Narasimhan filtration of $\genquotvb$.
\end{theorem}

Let us make some remarks about our main results. By Theorem \ref{classification of extensions, semistable case intro}, we can formulate Theorem \ref{classification of extensions, general case intro} 
purely in terms of Harder-Narasimhan polygons; 
we refer the readers to Theorem \ref{classification of extensions, general case} for a precise statement. 
After completing this work, we became aware that Chen-Tong \cite{CT_weakaddandnewton} independently obtained our main results in a similar way. 
For vector bundles on $\PP^1$, there is an analogue of our main results due to Schlesinger \cite{Schlesinger_extvbonP1}.

\subsection*{Acknowledgments} 
The author would like to thank the anonymous referee of the article \cite{Hong_extvbss} for a valuable feedback which eventually led to the discovery of Theorem \ref{classification of extensions, semistable case intro}. The author is also grateful for various suggestions from the anonymous referee of this article. 


\section{Vector bundles on the Fargues-Fontaine curve}\label{background}

Throughout this paper, we fix a field $\FFfield$ of characteristic $p>0$ which is complete, nonarchimedean, and algebraically closed. We also let $\genlocfield$ denote an arbitrary nonarchimedean local field whose residue field is finite of characteristic $p$. 




\begin{defn}\label{adicFFC} Let $\integerring{\genlocfield}$ and $\integerring{\FFfield}$ respectively denote the valuation rings of $\genlocfield$ and $\FFfield$. Fix a uniformizer $\uniformizer$ of $\genlocfield$ and a pseudouniformizer $\pseudounif$ of $\FFfield$. Let $q$ be the number of elements in the residue field of $\genlocfield$. 

\begin{enumerate}
\item If $\genlocfield$ is of equal characteristic, we set
\[ \Ycal_{\genlocfield, \FFfield} := \Spa(\integerring{\FFfield}[\![\uniformizer]\!])\setminus\{|\uniformizer \pseudounif|=0\},\]
and define the \emph{adic Fargues-Fontaine curve} associated to the pair $(\genlocfield, \FFfield)$ by
\[ \adicff_{\genlocfield, \FFfield}:= \Ycal_{\genlocfield, \FFfield}/\genfrob^\Z\]
where $\genfrob$ denotes the automorphism of $\Ycal_{\genlocfield, \FFfield}$ induced by the $q$-Frobenius automorphism on $\integerring{\FFfield}[\![\uniformizer]\!]$.
\smallskip

\item\label{definition of adic ff} If $\genlocfield$ is of mixed characteristic, we set
\[ \Ycal_{\genlocfield, \FFfield} := \Spa(\witt_{\integerring{\genlocfield}}(\integerring{\FFfield})\setminus\{|\uniformizer [\pseudounif]|=0\},\]
where $\witt_{\integerring{\genlocfield}}(\integerring{\FFfield})$ denotes the ring of ramified Witt vectors over $\integerring{\FFfield}$ with coefficients in $\integerring{\genlocfield}$ and the Teichmuller lift $[\pseudounif]$ of $\pseudounif$, and define the \emph{adic Fargues-Fontaine curve} associated to the pair $(\genlocfield, \FFfield)$ by
\[ \adicff_{\genlocfield, \FFfield}:= \Ycal_{\genlocfield, \FFfield}/\genfrob^\Z\]
where $\genfrob$ denotes the automorphism of $\Ycal_{\genlocfield, \FFfield}$ induced by the $q$-Frobenius automorphism on $\witt_{\integerring{\genlocfield}}(\integerring{\FFfield})$.
\smallskip

\item We define the \emph{schematic Fargues-Fontaine curve} associated to the pair $(\genlocfield, \FFfield)$ by
\[
\schff_{\genlocfield, \FFfield} := \Proj \left( \bigoplus_{n \geq 0 } H^0(\Ycal_{\genlocfield, \FFfield}, \strsheaf[\Ycal_{\genlocfield, \FFfield}])^{\genfrob = \uniformizer^n} \right).
\]
\end{enumerate}
\end{defn}

\begin{remark}
For curious readers, we address two nontrivial ingredients in the definition of the adic Fargues-Fontaine curve. 
\begin{enumerate}
\item We may identify the ring of ramified Witt vectors as
\[ \witt_{\integerring{\genlocfield}}(\integerring{\FFfield}) = \witt(\integerring{\FFfield}) \otimes_{\witt(\F_q)} \integerring{\genlocfield}\]
where $\witt(\integerring{\FFfield})$ and $\witt(\F_q)$ denote the rings of Witt vectors over $\integerring{\FFfield}$ and $\F_q$. 
\smallskip

\item The expression $\Ycal_{\genlocfield, \FFfield}/\genfrob^\Z$ is valid due to the fact that the action of $\genfrob$ on $\Ycal_{\genlocfield, \FFfield}$ is properly discontinuous. We refer the readers to \cite[Remark 3.1.9]{Kedlaya_arizona} for a proof of this fact. 
\end{enumerate}
\end{remark}

\begin{theorem}[{\cite[Theorems 6.3.12 and 8.7.7]{KL_relpadic1}}]\label{GAGA for FF curve}
There is a natural map of locally ringed spaces
\[
\adicff_{\finext,\algclosedperfdfield} \longrightarrow \schff_{\finext,\algclosedperfdfield}
\]
which induces by pullback an equivalence of the categories of vector bundles. 
\end{theorem}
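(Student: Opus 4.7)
The plan is to construct the comparison map via the universal property of Proj, and then reduce the equivalence of vector bundle categories to a common description in terms of $\genfrob$-equivariant vector bundles on $\Ycal_{\genlocfield,\FFfield}$.

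First, for the construction of the morphism, let $B := H^0(\Ycal_{\genlocfield,\FFfield}, \strsheaf[\Ycal_{\genlocfield,\FFfield}])$, so that the graded ring defining $\schff_{\genlocfield,\FFfield}$ is $P := \bigoplus_{n \geq 0} B^{\genfrob = \uniformizer^n}$. To each classical point $y \in \Ycal_{\genlocfield,\FFfield}$, the $\genfrob$-orbit $\genfrob^{\Z} \cdot y$ descends to a point of $\adicff_{\genlocfield,\FFfield}$, and the elements of $P$ vanishing along this orbit form a homogeneous prime ideal of $P$, hence a point of $\Proj P = \schff_{\genlocfield,\FFfield}$. This rule extends to a continuous map on underlying topological spaces, and the morphism of structure sheaves is built by observing that for homogeneous $f \in P_n$ with $n>0$, the basic affine open $D_+(f) \subset \schff_{\genlocfield,\FFfield}$ has coordinate ring $(P[1/f])_0$, which injects naturally into $H^0$ of its preimage in $\adicff_{\genlocfield,\FFfield}$.

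Next, to prove the equivalence of vector bundle categories, I would compare each side with the category of $\genfrob$-equivariant vector bundles on $\Ycal_{\genlocfield,\FFfield}$. On the adic side this identification is essentially tautological, since $\genfrob$ acts freely and properly discontinuously on $\Ycal_{\genlocfield,\FFfield}$, so vector bundles on $\adicff_{\genlocfield,\FFfield}$ descend from $\genfrob$-equivariant vector bundles on the cover. On the schematic side, given a vector bundle $\genvb$ one forms the graded $P$-module
\[ M_{\genvb} := \bigoplus_{n \geq 0} H^0(\schff_{\genlocfield,\FFfield}, \genvb(n)), \]
whose associated sheaf recovers $\genvb$ provided $\strsheaf[\schff_{\genlocfield,\FFfield}](1)$ is ample; pulling $M_\genvb$ back via the composition $\Ycal_{\genlocfield,\FFfield} \to \adicff_{\genlocfield,\FFfield} \to \schff_{\genlocfield,\FFfield}$ produces a canonical $\genfrob$-equivariant vector bundle on $\Ycal_{\genlocfield,\FFfield}$. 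Matching this with the previous construction identifies the two categories and shows that the identification is implemented by pullback along the map constructed in the first step.

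The main obstacle is proving ampleness of $\strsheaf[\schff_{\genlocfield,\FFfield}](1)$ and the requisite cohomological finiteness and projectivity statements, since neither $\schff_{\genlocfield,\FFfield}$ nor $\Ycal_{\genlocfield,\FFfield}$ is of finite type over $\genlocfield$. Following Kedlaya-Liu, one produces enough $\genfrob$-eigenvectors in $B$ with controlled zero loci to cover $\Ycal_{\genlocfield,\FFfield}$ by finitely many rational subdomains whose coordinate rings are of Robba type, and invokes the analytic slope theorem for $\genfrob$-modules to obtain the needed finite-generation and projectivity. Once these technical inputs are secured, both essential surjectivity and fully faithfulness of the pullback functor follow formally from the $\genfrob$-module description and the Proj-to-graded-modules dictionary.
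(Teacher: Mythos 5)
The paper itself does not prove this statement; it is taken as a black box from Kedlaya--Liu (Theorems 6.3.12 and 8.7.7 of \cite{KL15}), so there is no internal proof to compare against. Judged on its own, your sketch captures the right global strategy (compare both sides with $\genfrob$-equivariant vector bundles on $\Ycal_{\genlocfield,\FFfield}$, and on the schematic side run the standard $\Proj$-to-graded-module dictionary), but it has two substantive gaps that prevent it from being a proof rather than a roadmap.

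First, the construction of the morphism is incomplete. You build a set-theoretic map only on classical points of $\Ycal_{\genlocfield,\FFfield}$; an adic space has many non-classical (e.g.\ higher-rank) points, and a morphism of locally ringed spaces needs to be defined on all of them, together with a continuous underlying map and a compatible map of structure sheaves. Moreover, saying that the vanishing ideal of a $\genfrob$-orbit ``is a homogeneous prime ideal of $P$, hence a point of $\Proj P$'' silently assumes it is not the irrelevant ideal; this requires knowing there exist $\genfrob$-eigenvectors in positive degree both vanishing and nonvanishing at $y$, which is already a nontrivial statement about the periods ring. The standard route is instead to cover $\schff_{\genlocfield,\FFfield}$ by the basic opens $D_+(t)$ for nonzero $t \in P_1$, identify $\Ocal(D_+(t)) = (P[1/t])_0$ with an explicit ring of $\genfrob$-invariant functions on the complement of the zero locus of $t$ in $\Ycal_{\genlocfield,\FFfield}$, and glue; the map on points is then a consequence rather than the starting point. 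Second, the paragraph that is supposed to carry the actual weight --- ampleness of $\Ocal(1)$, coherence and finite generation of the graded modules $M_\genvb$, and the slope-theoretic input guaranteeing that $\genfrob$-modules over the relevant Robba-type rings are projective --- is entirely deferred to ``following Kedlaya--Liu.'' Those are precisely the hard theorems being cited, so as written the argument is circular at the point where it matters. A correct blind proof would either have to reproduce the slope filtration machinery or honestly present this as a summary of the external reference rather than a proof.
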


In light of Theorem \ref{GAGA for FF curve}, which may be regarded as GAGA for the Fargues-Fontaine curve, we will henceforth identify vector bundles on $\adicff_{\genlocfield, \FFfield}$ with vector bundles on $\schff_{\genlocfield, \FFfield}$.

\begin{defn}\label{o-r-over-s}
Let $\lambda = d/r$ be a rational number written in lowest terms with $r>0$. We define the vector bundle $\trivbundle[\genlocfield, \FFfield](\lambda)$ on $\adicff_{\genlocfield, \FFfield}$ (or on $\schff_{\genlocfield, \FFfield}$) by descending along the map $\Ycal_{\genlocfield, \FFfield} \longrightarrow \Ycal_{\genlocfield, \FFfield}/\genfrob^\Z = \adicff_{\genlocfield, \FFfield}$ the trivial bundle $\strsheaf[\Ycal_{\genlocfield, \FFfield}]^{\oplus r}$ equipped with the isomorphism $\genfrob^*\strsheaf[\Ycal_{\genlocfield, \FFfield}]^{\oplus r} \stackrel{\sim}{\longrightarrow} \strsheaf[\Ycal_{\genlocfield, \FFfield}]^{\oplus r}$ represented by the matrix
\[
\left(\begin{array}{@{}c|c@{}}
& \begin{matrix} 1 & & \\ & \ddots & \\ & & 1 \end{matrix} \\ \hline  \uniformizer^{-d} &  
\end{array}\right).
\]
\end{defn}


\begin{prop}[{\cite[Th\'eor\`eme 6.5.2
]{FF_curve}}]\label{why FF curve is a curve} 
The schematic Fargues-Fontaine curve $\schff_{\genlocfield,\FFfield}$ is a Dedekind scheme over $\genlocfield$, with a natural isomorphism from its Picard group $\Pic(\schff_{\genlocfield, \FFfield})$ to $\Z$ which associates each $d \in \Z$ with $\trivbundle[\genlocfield, \FFfield](d)$. 
\end{prop}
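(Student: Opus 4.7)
The plan is to prove the two assertions (Dedekind property and Picard group computation) separately, passing through the adic curve via the GAGA equivalence of Theorem~\ref{GAGA for FF curve} whenever it is convenient to work analytically.

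For the Dedekind property I would first classify the closed points of $\schff_{\genlocfield,\FFfield}$. Under GAGA these correspond to $\genfrob$-orbits of classical points of $\Ycal_{\genlocfield,\FFfield}$, which in turn are parametrized by untilts of $\algclosedperfdfield$ to characteristic-zero algebraically closed perfectoid extensions of $\genlocfield$. For each such closed point $x$ one produces a distinguished element $t_x \in P_1 := H^0(\Ycal_{\genlocfield,\FFfield}, \strsheaf)^{\genfrob = \uniformizer}$ — a ``theta element'' — whose zero locus is precisely the $\genfrob$-orbit corresponding to $x$ and which vanishes simply there. This shows that $P := \bigoplus_{n\geq 0} H^0(\Ycal_{\genlocfield,\FFfield}, \strsheaf)^{\genfrob = \uniformizer^n}$ contains enough degree-one elements so that $\schff_{\genlocfield,\FFfield}$ is covered by finitely many affine opens of the form $\Spec(P[1/t_x]_0)$. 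On such an affine open $t_x$ becomes a unit, and one checks directly that $P[1/t_x]_0$ is a Dedekind domain by writing down a uniformizer at each of its maximal ideals (another theta element divided by a power of $t_x$). Since $H^0(\schff_{\genlocfield,\FFfield}, \strsheaf) = \genlocfield$, the scheme is connected and integral, and the finite Dedekind affine cover then gives that $\schff_{\genlocfield,\FFfield}$ is a noetherian regular scheme of Krull dimension $1$.

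For the Picard group, I would construct the degree homomorphism $\deg \colon \Pic(\schff_{\genlocfield,\FFfield}) \to \Z$ via the usual divisor formalism on a Dedekind scheme, normalizing each closed point to have degree $1$ (this is well defined because all closed points are visibly ``equivalent'' under the $\genfrob$-orbit description, and a standard reciprocity argument on the graded ring $P$ shows that principal divisors have degree zero). To compute $\deg(\trivbundle[\genlocfield,\FFfield](d))$, identify $H^0(\schff_{\genlocfield,\FFfield}, \trivbundle(d))$ with $P_d$ via the $\Proj$ construction; for $d \geq 1$ a nonzero section coming from a product of $d$ theta elements vanishes to total order $d$, so $\deg(\trivbundle(d)) = d$. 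This simultaneously proves surjectivity of $\deg$ and that the assignment $d \mapsto \trivbundle[\genlocfield,\FFfield](d)$ splits it.

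The main obstacle, and the deepest ingredient from \cite{FF_curve}, is injectivity: showing that every degree-zero line bundle $L$ on $\schff_{\genlocfield,\FFfield}$ is trivial. The plan is to interpret $L$ analytically as a rank-one $\genfrob$-equivariant vector bundle on $\Ycal_{\genlocfield,\FFfield}$ and to reduce the question to producing a nonzero $\genfrob$-invariant global section. This rests on the vanishing $H^1(\schff_{\genlocfield,\FFfield}, L) = 0$ for degree-zero line bundles, which follows from the Fargues-Fontaine classification of $\genfrob$-modules over the Robba ring together with the algebraic closedness of $\algclosedperfdfield$. Once a nowhere-vanishing section is produced, $L \cong \trivbundle[\genlocfield,\FFfield]$, completing the proof.
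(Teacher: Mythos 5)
This proposition is quoted by the paper from \cite{FF_curve} (Th\'eor\`eme 6.5.2) with no in-paper proof, so there is nothing internal to compare your sketch against; the comparison is with the cited source.

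Your outline reproduces the shape of Fargues and Fontaine's argument --- degree-one ``theta'' elements in $P_1$, the affine cover $\Spec(P[1/t_x]_0)$, a degree map on divisors --- but it systematically treats the deepest step as a routine verification. The assertion that ``one checks directly that $P[1/t_x]_0$ is a Dedekind domain by writing down a uniformizer at each of its maximal ideals'' conflates regularity at closed points with the Dedekind property: exhibiting local uniformizers shows the localizations are DVRs, but by itself gives neither noetherianness nor (what Fargues--Fontaine actually prove) that $P[1/t]_0$ is a principal ideal domain. That $P[1/t]_0$ is a PID is Th\'eor\`eme 6.2.1 of \cite{FF_curve}, the central and hardest theorem of the book, established via a delicate analysis of Newton polygons, Lubin--Tate periods, and factorization of primitive elements --- it cannot be dispatched by inspection. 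The same missing ingredient reappears in your Picard computation, since $\Pic^0(\schff_{\genlocfield,\FFfield})=0$ is equivalent to the class group of $P[1/t]_0$ vanishing: the ``injectivity'' you rightly flag as the main obstacle is that very PID theorem, not an independent cohomological fact.

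The specific route you propose for injectivity also does not close on its own terms. Vanishing of $H^1(\schff_{\genlocfield,\FFfield},L)$ for degree-zero $L$ does not produce a nonzero section without some Euler-characteristic or Riemann--Roch input, and on the Fargues--Fontaine curve $H^0(\Ocal(d))$ for $d>0$ is infinite-dimensional over $\genlocfield$, so the naive $\chi=\deg+1$ identity is not available off the shelf. Moreover, invoking ``the Fargues--Fontaine classification of $\genfrob$-modules over the Robba ring'' is logically upstream of what you are proving: in both \cite{FF_curve} and this paper, the classification of vector bundles (Theorem \ref{existence of HN decomp} here, Th\'eor\`eme 8.2.10 there) is deduced \emph{from} the Dedekind/Picard statement of the present proposition, not the reverse. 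One can instead import Kedlaya's or Hartl--Pink's earlier slope theory and argue in the direction you sketch, but that is a genuinely different proof from the cited one and should be identified as such rather than folded into a reference to Fargues--Fontaine.
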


\begin{remark}
While Proposition \ref{why FF curve is a curve} suggests that $\schff_{\genlocfield, \FFfield}$ behaves much as algebraic curves do, $\schff_{\genlocfield, \FFfield}$ itself is not an algebraic curve for not being of finite type over the base field $\genlocfield$. Indeed, the residue field at every closed point on $\schff_{\genlocfield, \FFfield}$ is an algebraically closed and complete extension of $\genlocfield$. 
\end{remark}

\begin{defn}
Let $\genvb$ be an arbitrary nonzero vector bundle on $\schff_{\genlocfield, \FFfield}$. 
\begin{enumerate}[label=(\arabic*)]
\item We write $\rk(\genvb)$ for the rank of $\genvb$, and define the \emph{degree} of $\genvb$ to be the integer $\deg(\genvb)$ which corresponds to the isomorphism class of the determinant line bundle $\wedge^{\rk(\genvb)} (\genvb)$ under the natural isomorphism $\Pic(\schff_{\genlocfield, \FFfield}) \cong \Z$ in Proposition \ref{why FF curve is a curve}. 
\smallskip

\item We define the \emph{slope} of $\genvb$ to be
\[\mu(\genvb) := \dfrac{\deg(\genvb)}{\rk(\genvb)}.\]
\end{enumerate}
\end{defn}

\begin{prop}[{\cite[Proposition 5.6.23]{FF_curve}},{\cite[Proposition 4.1.3]{Kedlaya_slopefiltrations}}]\label{basic properties of stable bundles}
Let $\lambda = d/r$ be a rational number written in lowest terms with $r>0$.
\begin{enumerate}[label=(\arabic*)]
\item\label{rank and degree of stable bundles} The vector bundle $\trivbundle[\genlocfield, \FFfield](\lambda)$ has rank $r$, degree $d$, and slope $\lambda = d/r$.
\smallskip

\item\label{dual of stable bundles} The dual of $\trivbundle[\genlocfield, \FFfield](\lambda)$ is isomorphic to $\trivbundle[\genlocfield, \FFfield](-\lambda)$.
\smallskip


\item\label{vanishing hom between stable bundles} $\Hom(\trivbundle[\genlocfield, \FFfield](\lambda), \trivbundle[\genlocfield, \FFfield](\mu))$ is trivial for all rational numbers $\mu < \lambda$.
\smallskip

\item\label{vanishing ext between stable bundles} $\Ext^1(\trivbundle[\genlocfield, \FFfield](\lambda), \trivbundle[\genlocfield, \FFfield](\mu))$ is trivial for all rational numbers $\mu \geq \lambda$. 
\end{enumerate}
\end{prop}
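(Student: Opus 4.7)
All four statements reduce to direct computations with the $\genfrob$-bundle $(V_\lambda, \Phi_\lambda)$ on $\Ycal_{\genlocfield,\FFfield}$ whose descent along $\Ycal_{\genlocfield,\FFfield} \to \adicff_{\genlocfield,\FFfield}$ defines $\trivbundle[\genlocfield,\FFfield](\lambda)$. My plan is to translate each assertion into a statement about $\genfrob$-invariants and cokernels of $\genfrob$-twisted maps on the global ring $B := H^0(\Ycal_{\genlocfield,\FFfield}, \strsheaf[\Ycal_{\genlocfield,\FFfield}])$, then invoke the structural results on $B$ from \cite{FF_curve}.

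For (1), the rank assertion is immediate from the descent construction. For the degree, I would take the top exterior power of the Frobenius matrix in Definition \ref{o-r-over-s}: its determinant equals $\pm \uniformizer^{-d}$, so $\det \trivbundle[\genlocfield,\FFfield](\lambda)$ corresponds to the integer $d$ under the isomorphism $\Pic(\schff_{\genlocfield,\FFfield}) \cong \Z$ of Proposition \ref{why FF curve is a curve}. The slope $d/r$ then follows by definition. For (2), the dual of $(V_\lambda, \Phi_\lambda)$ is $(V_\lambda^\vee, (\Phi_\lambda^\vee)^{-1})$; taking the inverse transpose of the block Frobenius matrix and applying an explicit change of basis identifies this $\genfrob$-bundle with the descent data defining $\trivbundle[\genlocfield,\FFfield](-\lambda)$.

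For (3) and (4), I would use the standard identifications
\[ \Hom(\trivbundle[\genlocfield,\FFfield](\lambda), \trivbundle[\genlocfield,\FFfield](\mu)) = H^0(\schff_{\genlocfield,\FFfield}, \trivbundle[\genlocfield,\FFfield](\lambda)^\vee \otimes \trivbundle[\genlocfield,\FFfield](\mu)) \]
and the analogous formula for $\Ext^1$ with $H^1$. By (2) together with the formula for tensor products of $\genfrob$-bundles, the tensor $\trivbundle[\genlocfield,\FFfield](-\lambda) \otimes \trivbundle[\genlocfield,\FFfield](\mu)$ is a direct sum of $\genfrob$-bundles, all of slope $\mu - \lambda$. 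After passing to a suitable unramified cover of $\Ycal_{\genlocfield,\FFfield}$ and splitting the resulting $\genfrob^N$-module, cohomology reduces to the two-term complex
\[ B \xrightarrow{\;\genfrob^N - \uniformizer^M\;} B \]
for positive integer $N$ and integer $M$ whose ratio $M/N$ is proportional to $\mu - \lambda$. Statement (3) then reduces to the vanishing of $B^{\genfrob^N = \uniformizer^M}$ when $\mu < \lambda$, and statement (4) to the surjectivity of $\genfrob^N - \uniformizer^M$ on $B$ when $\mu \geq \lambda$.

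The main obstacle is this final pair of assertions about $B$, which constitute the analytic core of Fargues-Fontaine theory. They rest on the existence and multiplicativity of Newton polygons for nonzero elements of $B$, through which one controls the eigenvalues and the image of $\genfrob$-twisted linear maps. These structural results, established in \cite{FF_curve}, are highly nontrivial; once they are in hand, however, the above reductions make the proof of the proposition entirely formal.
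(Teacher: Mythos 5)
The paper does not prove this proposition: it is stated as a direct citation to Fargues--Fontaine \cite{FF_curve} and Kedlaya \cite{Kedlaya_slopefiltrations_revisited}, so there is no in-paper argument to compare against. What follows assesses your sketch as an outline of the standard proof from those sources.

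Your reductions are sound and identify the correct bottleneck: after dualizing, tensoring, and pulling back along a suitable unramified cover (Proposition \ref{unramified covers of FF curve}), parts (3) and (4) come down to computing the kernel and cokernel of a $\genfrob$-twisted map $\genfrob^N - \uniformizer^M$ on the ring $B$ of holomorphic functions on $\Ycal_{\genlocfield,\FFfield}$, and the non-formal content really is the Newton-polygon theory for elements of $B$. Two remarks. First, in your computation for (1) you take the determinant of the Frobenius matrix to be $\pm\uniformizer^{-d}$; as printed, Definition \ref{o-r-over-s} equips a free module of rank $d$ with a matrix containing $\uniformizer^{-r}$, which would give rank $d$ and determinant $\pm\uniformizer^{-r}$, contradicting the proposition. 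This is a typo in the paper (it should be a rank-$r$ module with $\uniformizer^{-d}$), and you have silently used the corrected convention, which is the one actually consistent with \cite{FF_curve}. Second, when you assert that $\trivbundle[\genlocfield,\FFfield](-\lambda)\otimes\trivbundle[\genlocfield,\FFfield](\mu)$ decomposes into summands of slope $\mu-\lambda$, you should make clear that this is obtained from the explicit isocrystal computation (Dieudonn\'e--Manin, or the unramified-cover calculation in the spirit of Proposition \ref{unramified covers of FF curve}) and not from the classification of semistable bundles, which is proved afterwards and would make the argument circular.
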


\begin{prop}[{\cite[Proposition 5.6.23]{FF_curve}}]\label{unramified covers of FF curve}
Let $\genlocfield'$ be an unramified finite extension of $\genlocfield$. Denote by $d$ the degree of $\genlocfield'$ over $\genlocfield$. 
\begin{enumerate}[label=(\arabic*)]
\item There exists a canonical isomorphism $\schff_{\genlocfield', \FFfield} \cong \schff_{\genlocfield, \FFfield} \times_{\Spec(\genlocfield)} \Spec(\genlocfield')$.
\smallskip

\item The projection map 
\[ \pi: \schff_{\genlocfield', \FFfield} \cong \schff_{\genlocfield, \FFfield} \times_{\Spec(\genlocfield)} \Spec(\genlocfield') \longrightarrow \schff_{\genlocfield, \FFfield}\]
induces a natural identification
\[ \pi^*\trivbundle[\genlocfield, \FFfield](\lambda) \cong \trivbundle[\genlocfield', \FFfield](d\lambda)^{\oplus m} \quad\quad \text{ for each } \lambda \in \Q\]
with $m = \rk(\trivbundle[\genlocfield, \FFfield](\lambda))/\rk(\trivbundle[\genlocfield', \FFfield](d\lambda))$. 
\end{enumerate}
\end{prop}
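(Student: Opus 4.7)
My plan is to establish part (1) by a descent argument on the adic side and then transfer to the schematic side using GAGA (Theorem \ref{GAGA for FF curve}), after which part (2) follows from an explicit analysis of how the defining Frobenius matrix of $\trivbundle[\genlocfield, \FFfield](\lambda)$ decomposes when iterated $d$ times.

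For part (1), the starting observation is that since $\genlocfield'/\genlocfield$ is unramified of degree $d$, its residue field $\F_{q^d}$ embeds into $\integerring{\FFfield}$ (as $\FFfield$ is algebraically closed of characteristic $p$), and the resulting Teichmuller lift gives a canonical $\integerring{\genlocfield'}$-algebra structure on the ring defining $\Ycal_{\genlocfield, \FFfield}$. Under this identification the ring becomes the one defining $\Ycal_{\genlocfield', \FFfield}$, and the $q^d$-Frobenius of the latter matches $\genfrob^d$ on the former. Consequently,
\[ \adicff_{\genlocfield', \FFfield} \;=\; \Ycal_{\genlocfield, \FFfield}/\genfrob^{d\Z}, \]
which admits a natural finite \'etale degree-$d$ map to $\adicff_{\genlocfield, \FFfield} = \Ycal_{\genlocfield, \FFfield}/\genfrob^\Z$, with deck-transformation group cyclic of order $d$ generated by the class of $\genfrob$. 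I would then identify this cyclic group with $\Gal(\genlocfield'/\genlocfield)$ via its action on $\F_{q^d}$, so that the cover becomes the base change along $\Spec \genlocfield' \to \Spec \genlocfield$; GAGA then transports this identification to the schematic side.

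For part (2), pulling $\trivbundle[\genlocfield, \FFfield](\lambda)$ back along $\pi$ amounts to viewing the defining $\genfrob$-equivariant trivial bundle on $\Ycal_{\genlocfield, \FFfield}$ as a $\genfrob^d$-equivariant bundle under the identification $\Ycal_{\genlocfield, \FFfield} = \Ycal_{\genlocfield', \FFfield}$. Writing $\lambda = a/b$ in lowest terms, the Frobenius matrix from Definition \ref{o-r-over-s} cyclically permutes the standard basis vectors and picks up one fixed negative power of $\uniformizer$ per complete cycle. Iterating $d$ times, $\genfrob^d$ acts via the permutation $i \mapsto i + d \pmod b$ together with a $\uniformizer$-power determined by the number of wrap-arounds, and this permutation decomposes $\{1, \ldots, b\}$ into $g := \gcd(d, b)$ orbits of common size $b/g$. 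Each orbit spans a $\genfrob^d$-stable subbundle which, after a change of basis, is identified with the defining $\genfrob^d$-module of $\trivbundle[\genlocfield', \FFfield](d\lambda)$. Since $\gcd(a, b) = 1$ forces $\gcd(da, b) = \gcd(d, b) = g$, each summand has rank $b/g = \rk(\trivbundle[\genlocfield', \FFfield](d\lambda))$, and the total number $g$ of summands equals the stated multiplicity $m$.

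The main technical point will be verifying the adic identification used in part (1) in the mixed-characteristic setting, where one must check that the Teichmuller-lift embedding $\witt(\F_{q^d}) \hookrightarrow \witt_{\integerring{\genlocfield}}(\integerring{\FFfield})$ promotes the latter canonically to $\witt_{\integerring{\genlocfield'}}(\integerring{\FFfield})$ with $\genfrob^d$ realized as the $q^d$-Frobenius. Once this compatibility is in place the rest of part (1) is formal, and the matrix computation for part (2) is a routine combinatorial decomposition.
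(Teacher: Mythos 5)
The paper does not give its own proof of this proposition --- it is quoted directly from Fargues--Fontaine \cite{FF_curve}, with no internal argument to compare against. That said, your argument is correct and is the standard one for this fact. For part (1): the key point is that $\FFfield$ algebraically closed forces $\F_{q^d}\subset\integerring{\FFfield}$, so that the ring defining $\Ycal_{\genlocfield,\FFfield}$ acquires a canonical $\integerring{\genlocfield'}$-structure identifying it with the ring defining $\Ycal_{\genlocfield',\FFfield}$, and $\genfrob_{\genlocfield'}=\genfrob^d$; then $\adicff_{\genlocfield',\FFfield}=\Ycal_{\genlocfield,\FFfield}/\genfrob^{d\Z}\to\adicff_{\genlocfield,\FFfield}$ is the cyclic degree-$d$ finite \'etale cover whose deck group matches $\Gal(\genlocfield'/\genlocfield)$, and GAGA transports this. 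For part (2): the orbit count $\gcd(d,b)$ with orbit size $b/\gcd(d,b)$, together with $\gcd(da,b)=\gcd(d,b)$ since $\gcd(a,b)=1$, gives exactly the stated multiplicity, and you correctly flag the mixed-characteristic Witt-vector check as the one genuine point to verify.

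Two remarks on details. First, Definition \ref{o-r-over-s} as printed uses $\strsheaf^{\oplus d}$ and the matrix entry $\uniformizer^{-r}$ for $\lambda=d/r$, which contradicts Proposition \ref{basic properties of stable bundles}(1) (rank $r$, degree $d$); your reading --- rank equals the denominator $b$, with $\uniformizer^{-a}$ in the corner --- is the one consistent with the rest of the paper and with the Fargues--Fontaine normalization, and it is the one your orbit count requires, so you should state that correction explicitly. Second, when you say each orbit ``after a change of basis'' is identified with the defining $\genfrob^d$-descent datum of $\trivbundle[\genlocfield',\FFfield](d\lambda)$, you are implicitly using a diagonal rescaling by powers of $\uniformizer$ to collect the total power $\uniformizer^{-ad/\gcd(d,b)}$ accumulated around the orbit into a single matrix entry; this is routine but should be said, since it is precisely what produces the normalized companion matrix.
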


\begin{defn}
Let $\genvb$ be a vector bundle on $\schff_{\genlocfield, \FFfield}$. 
\begin{enumerate}[label=(\arabic*)]
\item We say that $\genvb$ is \emph{stable} if we have $\mu(\gensecvb) < \mu(\genvb)$ for all nonzero subbundles $\gensecvb \subsetneq \genvb$.
\smallskip

\item We say that $\genvb$ is \emph{semistable} if we have $\mu(\gensecvb) \leq \mu(\genvb)$ for all nonzero subbundles $\gensecvb \subsetneq \genvb$. 
\end{enumerate}
\end{defn}

\medskip
\begin{theorem}[{\cite[Th\'eor\`eme 8.2.10]{FF_curve}}]\label{existence of HN decomp}
Let $\genvb$ be a vector bundle on $\schff_{\genlocfield, \FFfield}$. 
\begin{enumerate}[label=(\arabic*)]
\item $\genvb$ is stable of slope $\lambda$ if and only if it is isomorphic to $\trivbundle[\genlocfield, \FFfield](\lambda)$. 
\smallskip

\item $\genvb$ is semistable of slope $\lambda$ if and only if it is isomorphic to $\trivbundle[\genlocfield, \FFfield](\lambda)^{\oplus m}$ for some $m$. 
\smallskip

\item $\genvb$ admits a direct sum decomposition
\begin{equation*}
\genvb \simeq \bigoplus_{i=1}^l \trivbundle[\genlocfield, \FFfield](\lambda_i)^{\oplus m_i} \quad\quad \text{ with } \lambda_i \in \Q,
\end{equation*}
where the $\lambda_i$'s are all distinct and uniquely determined up to permutations. 
\end{enumerate}
\end{theorem}

\begin{remark}
Prior to the work of Fargues-Fontaine \cite{FF_curve}, Theorem \ref{existence of HN decomp} had been obtained in a different language by Hartl-Pink \cite[Theorem 11.1]{HP_equalcharFFcurve} and Kedlaya \cite[Theorem 4.5.7]{Kedlaya_slopefiltrations}.
\end{remark}

\begin{defn}\label{def of HN filtration, decomposition and polygon}
Let $\genvb$ be a vector bundle on $\schff_{\genlocfield, \FFfield}$. Fix a direct sum decomposition
\begin{equation}\label{HN decomposition}
\genvb \simeq \bigoplus_{i=1}^l \trivbundle[\genlocfield, \FFfield](\lambda_i)^{\oplus m_i} \quad\quad \text{ with } \lambda_i \in \Q
\end{equation}
given by Theorem \ref{existence of HN decomp}. 
\begin{enumerate}[label = (\arabic*)]

\item We refer to the numbers $\lambda_i$ as the 
\emph{Harder-Narasimhan (HN) slopes} of $\genvb$, or often simply as the 
\emph{slopes} of $\genvb$,
and write $\mumax(\genvb)$ and $\mumin(\genvb)$ respectively for the maximum and minimum HN slopes of $\genvb$. 
\smallskip

\smallskip

\item For every $\mu \in \Q$, we define the direct summands
\[\genvb^{\geq \mu} := \bigoplus_{\lambda_i \geq \mu}\trivbundle[\genlocfield, \FFfield](\lambda_i)^{\oplus m_i} \quad\quad \text{ and } \quad\quad\genvb^{\leq \mu} := \bigoplus_{\lambda_i \leq \mu}\trivbundle[\genlocfield, \FFfield](\lambda_i)^{\oplus m_i},\]
and similarly define $\genvb^{> \mu}$ and $\genvb^{<\mu}$. 
\smallskip

\item We define the \emph{Harder-Narasimhan (HN) polygon} of $\genvb$, denoted by $\HN(\genvb)$, as the upper convex hull of the points $(0, 0)$ and $\big(\rk(\genvb^{\geq \lambda_i}), \deg(\genvb^{\geq \lambda_i})\big)$.
\end{enumerate}
\end{defn}

\begin{remark}
For our work, the choice of the decomposition \eqref{HN decomposition} does not play a significant role. In fact, since $\genvb$ determines $\lambda_i$ and $m_i$, it consequently determines the isomorphism classes of $\genvb^{\geq \mu}$, $\genvb^{\leq \mu}$, $\genvb^{> \mu}$, and $\genvb^{<\mu}$ for each $\mu \in \Q$, and thus the polygon $\HN(\genvb)$. 
\end{remark}

\section{Extensions and permutations of HN polygons}

Our goal in this section is to state and derive the necessary conditions for the existence of a short exact sequence involving three given vector bundles on the Fargues-Fontaine curve. 

\begin{defn}
A \emph{rationally tuplar polygon} is a graph $\genpolygon$ of a continuous function on a closed interval which satisfies the following properties:
\begin{enumerate}[label=(\roman*)]
\item The endpoints of $\genpolygon$ are integer points, with the left endpoint being $(0, 0)$. 
\smallskip

\item For each integer $i$ in the domain, $\genpolygon$ is linear on the interval $[i-1, i]$ with a rational slope denoted by $\HNslope{\genpolygon}{i}$. 
\end{enumerate} 
\end{defn}


\begin{remark}
For every vector bundle $\genvb$ on $\schff_{\genlocfield, \FFfield}$, its Harder-Narasimhan polygon $\HN(\genvb)$ is a rationally tuplar polygon with the following additional properties:
\begin{itemize}
\item All breakpoints of $\HN(\genvb)$ are integer points. 
\smallskip

\item The slopes of $\HN(\genvb)$ are decreasing. 
\end{itemize}
However, in this article we will consider rationally tuplar polygons which do not necessarily enjoy these properties. 
\end{remark}

\begin{lemma}\label{necessary condition for HN polygon of subsheaves and quotients}
Let $\genvb$ and $\gensecvb$ be arbitrary vector bundles on $\schff_{\genlocfield, \FFfield}$.
\begin{enumerate}[label=(\arabic*)]
\item If $\gensecvb$ is a subsheaf of $\genvb$, we have 
\[\HNslope{\HN(\gensecvb)}{i} \leq \HNslope{\HN(\genvb)}{i} \quad\quad \text{ for } i = 1, \cdots, \rk(\gensecvb).\]

\item If $\gensecvb$ is a quotient of $\genvb$, we have 
\[\HNslope{\HN(\gensecvb)}{i} \geq \HNslope{\HN(\genvb)}{i+\rk(\genvb)-\rk(\gensecvb)} \quad\quad \text{ for } i = 1, \cdots, \rk(\gensecvb).\]
\end{enumerate}
\end{lemma}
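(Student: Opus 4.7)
The plan is to deduce each claimed slope inequality from an equivalent rank inequality for the truncated direct summands of $\genvb$ and $\gensecvb$, and then to establish those rank inequalities by a single application of the Hom-vanishing statement of Proposition \ref{basic properties of stable bundles}\ref{vanishing hom between stable bundles}. Throughout, for $\lambda \in \Q$ we write $\genvb^{<\lambda}$ for the direct summand of $\genvb$ consisting of those HN factors of slope strictly less than $\lambda$, so that $\genvb = \genvb^{\geq \lambda} \oplus \genvb^{<\lambda}$ by the HN decomposition of Theorem \ref{existence of HN decomp}.

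The first step is a purely combinatorial reformulation. For part (1), the inequality $\HNslope{\HN(\gensecvb)}{i} \leq \HNslope{\HN(\genvb)}{i}$ for $i = 1, \ldots, \rk(\gensecvb)$ is equivalent to the condition $\rk(\gensecvb^{\geq \lambda}) \leq \rk(\genvb^{\geq \lambda})$ holding for every $\lambda \in \Q$, since each side counts (with multiplicity) the HN slopes that are at least $\lambda$; checking both directions is a quick argument about decreasing sequences. An analogous count shows that the inequality in part (2) is equivalent to $\rk(\gensecvb^{<\lambda}) \leq \rk(\genvb^{<\lambda})$ for every $\lambda \in \Q$, where the rank-shift $i + \rk(\genvb) - \rk(\gensecvb)$ in the statement matches the fact that slopes are counted from the bottom rather than the top.

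To produce these rank inequalities, fix $\lambda \in \Q$. In the setting of (1), the composite of the inclusion $\gensecvb \hookrightarrow \genvb$ with the projection $\genvb \twoheadrightarrow \genvb^{<\lambda}$, restricted to the direct summand $\gensecvb^{\geq \lambda} \subset \gensecvb$, gives a morphism $\gensecvb^{\geq \lambda} \to \genvb^{<\lambda}$ whose source has all HN slopes $\geq \lambda$ and whose target has all HN slopes $< \lambda$; Proposition \ref{basic properties of stable bundles}\ref{vanishing hom between stable bundles} then forces it to vanish, so $\gensecvb^{\geq \lambda}$ embeds into $\genvb^{\geq \lambda}$ and the rank bound follows. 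In the setting of (2), the same Hom-vanishing applied to the composite $\genvb^{\geq \lambda} \hookrightarrow \genvb \twoheadrightarrow \gensecvb \twoheadrightarrow \gensecvb^{<\lambda}$ shows that this composite is zero; hence the surjection $\genvb \twoheadrightarrow \gensecvb^{<\lambda}$ factors through $\genvb/\genvb^{\geq \lambda} = \genvb^{<\lambda}$, producing a surjection $\genvb^{<\lambda} \twoheadrightarrow \gensecvb^{<\lambda}$ and therefore the required rank bound.

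We do not anticipate any substantive obstacle: the combinatorial translation amounts to elementary bookkeeping on decreasing slope sequences, and the rank inequalities reduce entirely to a single appeal to Hom-vanishing via the slope formalism on the Fargues-Fontaine curve.
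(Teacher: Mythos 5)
Your argument is correct. For part (1) it is essentially the paper's proof recast: the paper argues by contradiction at a single index $i$, choosing $\mu = \HNslope{\HN(\gensecvb)}{i}$ and showing that Hom-vanishing forces the injection $\gensecvb \hookrightarrow \genvb$ to restrict to an injection $\gensecvb^{\geq \mu} \hookrightarrow \genvb^{\geq\mu}$, which contradicts the rank count from concavity; you instead prove the uniform rank inequality $\rk(\gensecvb^{\geq\lambda}) \leq \rk(\genvb^{\geq\lambda})$ for all $\lambda$ and then translate it back, which is the same mechanism packaged without the explicit contradiction. The genuine divergence is in part (2): the paper dualizes, viewing $\gensecvb^\vee$ as a subbundle of $\genvb^\vee$ and invoking part (1) plus the duality formulas in Proposition \ref{basic properties of stable bundles}; you instead argue directly, using Hom-vanishing to show the composite $\genvb^{\geq\lambda} \to \gensecvb^{<\lambda}$ vanishes, so that the surjection $\genvb \twoheadrightarrow \gensecvb^{<\lambda}$ descends to a surjection $\genvb^{<\lambda} \twoheadrightarrow \gensecvb^{<\lambda}$. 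Both are valid; the duality route is shorter once the bookkeeping of how $\HN$-slopes transform under $(-)^\vee$ is set up, while your direct surjection argument avoids that bookkeeping entirely and makes the symmetry between (1) and (2) transparent at the level of exact sequences rather than at the level of polygon arithmetic.
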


\begin{proof}
Let us establish the first statement. Suppose for contradiction that $\gensecvb$ is a subsheaf of $\genvb$ with $\HNslope{\HN(\gensecvb)}{i} > \HNslope{\HN(\genvb)}{i}$ for some positive integer $i \leq \rk(\gensecvb)$. Let us fix an injective bundle map $\gensecvb \inj \genvb$ and write $\mu:= \HNslope{\HN(\gensecvb)}{i}$. We have direct sum decompositions
\[ \genvb \simeq \genvb^{\geq \mu} \oplus \genvb^{< \mu} \quad\quad \text{ and } \quad\quad \gensecvb \simeq \gensecvb^{\geq \mu} \oplus \gensecvb^{< \mu}.\]
Since every bundle map from $\gensecvb^{\geq \mu}$ to $\genvb^{< \mu}$ is zero by Proposition \ref{basic properties of stable bundles}, the injective map $\gensecvb \inj \genvb$ restricts to an injective map $\gensecvb^{\geq \mu} \inj \genvb^{\geq \mu}$. Hence we find $\rk(\gensecvb^{\geq \mu}) \leq \rk(\genvb^{\geq \mu})$. On the other hand, we have $\rk(\gensecvb^{\geq \mu}) \geq i > \rk(\genvb^{\geq \mu})$ by the concavity of HN polygons and the inequality $\HNslope{\HN(\gensecvb)}{i} > \HNslope{\HN(\genvb)}{i}$. 
We thus obtain a contradiction as desired.

For the second statement, we now assume that $\gensecvb$ is a quotient of $\genvb$. Let us write $\genvb^\vee$ and $\gensecvb^\vee$ respectively for the duals of $\genvb$ and $\gensecvb$. Since $\gensecvb^\vee$ is a subbundle of $\genvb^\vee$, we use Proposition \ref{basic properties of stable bundles} and the first statement to find
\[ \HNslope{\HN(\gensecvb)}{i} = - \HNslope{\HN(\gensecvb^\vee)}{\rk(\gensecvb)+1-i} \geq - \HNslope{\HN(\genvb^\vee)}{\rk(\gensecvb)+1-i} = \HNslope{\HN(\genvb)}{i+\rk(\genvb)-\rk(\gensecvb)}\]
for each $i = 1, \cdots, \rk(\gensecvb)$, thereby deducing the second statement. 
\end{proof}

\begin{remark}
The converse of the first statement is true by a previous result of the author \cite[Theorem 1.1.2]{Hong_subvb},
whereas the converse of the second statement does not by another previous result of the author \cite[Theorem 1.1.2]{Hong_quotvb}. 
However, our argument won't need these results.
\end{remark}

\begin{defn}\label{dominance order definition}
Let $\genpolygon$ and $\gensecpolygon$ be rational tuplar polygons. We say that $\genpolygon$ \emph{dominates} $\gensecpolygon$ and write $\genpolygon \geq \gensecpolygon$ if $\genpolygon$ and $\gensecpolygon$ satisfy the following properties:
\begin{enumerate}[label=(\roman*)]
\item $\genpolygon$ and $\gensecpolygon$ have the same endpoints. 
\smallskip

\item If $r$ denotes the $x$-coordinate of their common right endpoint, we have
\[ \sum_{i = 1}^j \HNslope{\genpolygon}{i} \geq \sum_{i=1}^j \HNslope{\gensecpolygon}{i} \quad\quad \text{ for each } j = 1, \cdots, r.\]
\end{enumerate}
\end{defn}

\begin{remark}
In other words, we have $\genpolygon \geq \gensecpolygon$ if and only if $\genpolygon$ lies on or above $\gensecpolygon$ with the same endpoints, 
as in Mazur's inequality. 
\end{remark}

\begin{lemma}\label{dominance order is a partial order}
The binary relation $\geq$ is a partial order on the set of rational tuplar polygons. 
\end{lemma}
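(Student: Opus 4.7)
The plan is to verify directly the three axioms of a partial order: reflexivity, antisymmetry, and transitivity. All three reduce to elementary manipulations of the defining inequalities in Definition \ref{dominance order definition}, so the main point is really just to check that the conditions on endpoints and partial sums of slopes interact in the expected way.

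First I would observe that reflexivity $\genpolygon \geq \genpolygon$ is immediate, since $\genpolygon$ has the same endpoints as itself and the partial sums of its slopes trivially satisfy $\sum_{i=1}^j \HNslope{\genpolygon}{i} \geq \sum_{i=1}^j \HNslope{\genpolygon}{i}$ for all $j$.

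Next, for antisymmetry, suppose $\genpolygon \geq \gensecpolygon$ and $\gensecpolygon \geq \genpolygon$. Both pairs of inequalities together force
\[ \sum_{i=1}^j \HNslope{\genpolygon}{i} = \sum_{i=1}^j \HNslope{\gensecpolygon}{i} \quad \text{for each } j = 1, \cdots, r, \]
where $r$ is the common right $x$-coordinate. Taking consecutive differences gives $\HNslope{\genpolygon}{i} = \HNslope{\gensecpolygon}{i}$ for every $i$, and combined with the common left endpoint $(0,0)$ this forces $\genpolygon = \gensecpolygon$ as graphs of piecewise linear functions.

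Finally, for transitivity, suppose $\genpolygon \geq \gensecpolygon$ and $\gensecpolygon \geq \Rcal$ for some third rational tuplar polygon $\Rcal$. Condition (i) is transitive on the nose: all three polygons share the same endpoints. For condition (ii), chaining the two partial-sum inequalities yields
\[ \sum_{i=1}^j \HNslope{\genpolygon}{i} \geq \sum_{i=1}^j \HNslope{\gensecpolygon}{i} \geq \sum_{i=1}^j \HNslope{\Rcal}{i} \quad \text{for each } j = 1, \cdots, r, \]
so $\genpolygon \geq \Rcal$. I do not expect any real obstacle here; the statement is essentially bookkeeping, and the proof is a short direct verification.
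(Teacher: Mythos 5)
Your proof is correct and is exactly the direct verification of reflexivity, antisymmetry, and transitivity that the paper declares ``straightforward to check'' from Definition \ref{dominance order definition}. The key observation for antisymmetry --- taking consecutive differences of equal partial sums to recover equality of individual slopes --- is the only non-mechanical step, and you have it right.
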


\begin{proof}
This is straightforward to check using Definition \ref{dominance order definition}. 
\end{proof}

\begin{defn}\label{suitable permutation of HN polygon for extension}
Given vector bundles $\gensubvb$, $\genextvb$ and $\genquotvb$ on $\schff_{\genlocfield, \FFfield}$, we define 
a \emph{$(\gensubvb, \genextvb, \genquotvb)$-permutation} of $\HN(\gensubvb \oplus \genquotvb)$ to be a rationally tuplar polygon $\genpolygon \geq \HN(\genextvb)$ with the following properties:
\begin{enumerate}[label=(\roman*)]
\item The tuple $(\HNslope{\genpolygon}{i})$ is a permutation of the tuple $(\HNslope{\HN(\gensubvb \oplus \genquotvb)}{i})$. 
\smallskip


\item For each $i = 1, \cdots, \rk(\genextvb)$, we have
\smallskip
\begin{itemize}
\item $\HNslope{\genpolygon}{i} < \HNslope{\HN(\genextvb)}{i}$ only if $\HNslope{\genpolygon}{i}$ occurs as a slope of $\gensubvb$, and
\smallskip

\item $\HNslope{\genpolygon}{i} > \HNslope{\HN(\genextvb)}{i}$ only if $\HNslope{\genpolygon}{i}$ occurs as a slope of $\genquotvb$.
\end{itemize}
\end{enumerate}
\end{defn}

\begin{figure}[H]
\begin{tikzpicture}[scale=0.5]	
		\coordinate (left) at (0, 0);
		\coordinate (p1) at (2, 3);
		\coordinate (p2) at (6, 4);
		\coordinate (right) at (10, 2);

		\coordinate (q1) at (1, 4);
		\coordinate (q2) at (3, 6);
		\coordinate (q3) at (5, 7);
		\coordinate (q4) at (7, 7);
		\coordinate (q5) at (9, 5);
				
		\draw[step=1cm,thick, color=mynicegreen] (left) -- (p1) --  (p2) -- (right);
		\draw[step=1cm,thick, color=red] (left) -- (q1) -- (q2) -- (q3);
		\draw[step=1cm,thick, color=blue] (q3) -- (q4) -- (q5) -- (right);

		\draw [fill] (q1) circle [radius=0.05];		
		\draw [fill] (q2) circle [radius=0.05];		
		\draw [fill] (q3) circle [radius=0.05];		
		\draw [fill] (q4) circle [radius=0.05];		
		\draw [fill] (q5) circle [radius=0.05];		
		\draw [fill] (left) circle [radius=0.05];
		\draw [fill] (right) circle [radius=0.05];		
		
		\draw [fill] (p1) circle [radius=0.05];		
		\draw [fill] (p2) circle [radius=0.05];		
		
		\path (q5) ++(0.2, 0.6) node {\color{blue}$\gensubvb$};
		\path (p2) ++(0.2, 0.6) node {\color{mynicegreen}$\genextvb$};
		\path (q1) ++(-0.5, 0.2) node {\color{red}$\genquotvb$};
\end{tikzpicture}
\hspace{0.3cm}
\begin{tikzpicture}[scale=0.4]
        \pgfmathsetmacro{\textycoordinate}{5}
		\draw[->, line width=0.6pt] (0, \textycoordinate) -- (1.5,\textycoordinate);
		\draw (0,0) circle [radius=0.00];	
\end{tikzpicture}
\hspace{0.3cm}
\begin{tikzpicture}[scale=0.5]
		\coordinate (left) at (0, 0);
		\coordinate (p1) at (2, 3);
		\coordinate (p2) at (6, 4);
		\coordinate (right) at (10, 2);

		\coordinate (q1) at (1, 4);
		\coordinate (q2) at (3, 6);
		\coordinate (q3) at (5, 7);
		\coordinate (q4) at (7, 7);
		\coordinate (q5) at (9, 5);
		
		\coordinate (r1) at (q1);
		\coordinate (r2) at (2, 4);
		\coordinate (r3) at (4, 6);
		\coordinate (r4) at (5, 6);
		\coordinate (r5) at (q4);
		\coordinate (r6) at (q5);
				
		\draw[step=1cm,thick, color=mynicegreen] (left) -- (p1) --  (p2) -- (right);
		\draw[step=1cm,thick, color=red] (left) -- (q1);
		\draw[step=1cm,thick,dotted, color=red] (q1) -- (q2) -- (q3);
		\draw[step=1cm,thick,dotted, color=blue] (q3) -- (q4);
		\draw[step=1cm,thick, color=blue] (q4) -- (q5) -- (right);
		
		\draw[step=1cm,thick, color=blue] (r1) -- (r2);
		\draw[step=1cm,thick, color=red] (r2) -- (r3);
		\draw[step=1cm,thick, color=blue] (r3) -- (r4);
		\draw[step=1cm,thick, color=red] (r4) -- (r5);

		\draw [fill] (q1) circle [radius=0.05];		
		\draw [fill] (q2) circle [radius=0.05];		
		\draw [fill] (q3) circle [radius=0.05];		
		\draw [fill] (q4) circle [radius=0.05];		
		\draw [fill] (q5) circle [radius=0.05];		
		\draw [fill] (left) circle [radius=0.05];
		\draw [fill] (right) circle [radius=0.05];		
		
		\draw [fill] (p1) circle [radius=0.05];		
		\draw [fill] (p2) circle [radius=0.05];	

		\draw [fill] (r1) circle [radius=0.05];		
		\draw [fill] (r2) circle [radius=0.05];		
		\draw [fill] (r3) circle [radius=0.05];		
		\draw [fill] (r4) circle [radius=0.05];		
		\draw [fill] (r5) circle [radius=0.05];		
		\draw [fill] (r6) circle [radius=0.05];		
		
\end{tikzpicture}
\caption{Illustration of the conditions in Definition \ref{suitable permutation of HN polygon for extension}}
\end{figure}

\begin{lemma}\label{partition lemma for extension-permutations}
Let $\gensubvb$, $\genextvb$ and $\genquotvb$ be arbitrary vector bundles on $\schff_{\genlocfield, \FFfield}$. 
A rationally tuplar polygon $\genpolygon \geq \HN(\genextvb)$ is a $(\gensubvb, \genextvb, \genquotvb)$-permutation of $\HN(\gensubvb \oplus \genquotvb)$ if and only if there exists 
an ordered pair $(S_\gensubvb, S_\genquotvb)$ of sets satisfying the following properties:
\begin{enumerate}[label=(\roman*)]
\item The sets $S_\gensubvb$ and $S_\genquotvb$ form a partition of the index set $\{1, \cdots, \rk(\genextvb)\}$. 
\smallskip

\item The tuple $(\HNslope{\genpolygon}{i})_{i \in S_\gensubvb}$ permutes the tuple $(\HNslope{\HN(\gensubvb)}{i})$ with
\[  \HNslope{\genpolygon}{i} \leq \HNslope{\HN(\genextvb)}{i} \text{ for all } i \in S_\gensubvb.\]

\item The tuple $(\HNslope{\genpolygon}{i})_{i \in S_\genquotvb}$ permutes the tuple $(\HNslope{\HN(\genquotvb)}{i})$ with 
\[  \HNslope{\genpolygon}{j} \geq \HNslope{\HN(\genextvb)}{j} \text{ for all } j \in S_\genquotvb.\]
\end{enumerate}
\end{lemma}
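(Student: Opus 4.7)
The plan is to directly translate between the permutation-of-tuples formulation in Definition \ref{suitable permutation of HN polygon for extension} and the index-partition formulation in the lemma. For the backward implication, suppose a partition $(S_\gensubvb, S_\genquotvb)$ satisfying (i)--(iii) of the lemma is given. Property (i) of Definition \ref{suitable permutation of HN polygon for extension} is immediate, since the multiset of $\genpolygon$-slopes decomposes as the disjoint union of the $\gensubvb$-slopes (on $S_\gensubvb$) and the $\genquotvb$-slopes (on $S_\genquotvb$), which is exactly the multiset of slopes of $\HN(\gensubvb \oplus \genquotvb)$. For property (ii) of Definition \ref{suitable permutation of HN polygon for extension}, I would argue by contrapositive: if $\HNslope{\genpolygon}{i} < \HNslope{\HN(\genextvb)}{i}$ then $i$ cannot lie in $S_\genquotvb$ (that would violate the $\geq$ inequality of (iii) of the lemma), so $i \in S_\gensubvb$ and hence $\HNslope{\genpolygon}{i}$ occurs as a $\gensubvb$-slope; the symmetric argument handles the other strict inequality.

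For the forward implication, I would read the permutation in Definition \ref{suitable permutation of HN polygon for extension}(i) as a value-preserving bijection $\sigma$ between the multiset-positions of $\genpolygon$-slopes and those of $\HN(\gensubvb \oplus \genquotvb)$-slopes, with each position in the latter labeled by its origin from either $\gensubvb$ or $\genquotvb$; this is the natural reading consistent with the ``rearrangement of line segments'' language of Theorem \ref{classification of extensions, semistable case intro} and the accompanying figure. Setting
\[
S_\gensubvb := \{\, i : \sigma(i) \text{ lies in the } \gensubvb\text{-block}\, \}, \qquad S_\genquotvb := \{\, i : \sigma(i) \text{ lies in the } \genquotvb\text{-block}\, \}
\]
gives a partition of $\{1, \ldots, \rk(\genextvb)\}$ with $|S_\gensubvb| = \rk(\gensubvb)$ and $|S_\genquotvb| = \rk(\genquotvb)$. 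The restriction of $\sigma$ to each part supplies the slope-preserving permutations required in (ii) and (iii) of the lemma, while the accompanying inequalities are the contrapositive of Definition \ref{suitable permutation of HN polygon for extension}(ii): if $i \in S_\gensubvb$ but $\HNslope{\genpolygon}{i} > \HNslope{\HN(\genextvb)}{i}$, the constraint would force $\sigma(i)$ into the $\genquotvb$-block, contradicting $i \in S_\gensubvb$.

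The main obstacle is fixing the precise meaning of ``occurs as a slope'' in Definition \ref{suitable permutation of HN polygon for extension}(ii), which I take to track each individual slope of $\genpolygon$ to a distinguished slope-occurrence of either $\gensubvb$ or $\genquotvb$; with this convention, the partition $(S_\gensubvb, S_\genquotvb)$ is simply the record of which indices are assigned to $\gensubvb$ versus $\genquotvb$, and the lemma becomes a combinatorial restatement of Definition \ref{suitable permutation of HN polygon for extension}. A more extensional reading of Definition \ref{suitable permutation of HN polygon for extension}(ii) would require an additional slope-by-slope multiplicity check before the partition could be assembled, namely that for every slope value $\lambda$, the number of indices $i$ with $\HNslope{\genpolygon}{i} = \lambda < \HNslope{\HN(\genextvb)}{i}$ does not exceed the multiplicity of $\lambda$ in $\gensubvb$'s slopes (and dually for $>$), after which the remaining ``free'' indices distribute uniquely between the two sets.
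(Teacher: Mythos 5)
Your proof is correct and takes the same route as the paper, whose own proof of this lemma is the single sentence ``This is evident by Definition \ref{suitable permutation of HN polygon for extension}.'' Under the origin-tracking reading you settle on, both directions are indeed a straight unfolding, and your backward direction is unambiguously right.

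Your worry about the meaning of ``occurs as a slope,'' however, is not pedantry: under the literal extensional reading of Definition \ref{suitable permutation of HN polygon for extension}(ii), the forward implication of the lemma fails. Take $\gensubvb = \trivbundle[\genlocfield,\FFfield](1)$, $\genquotvb = \trivbundle[\genlocfield,\FFfield](3) \oplus \trivbundle[\genlocfield,\FFfield](1)^{\oplus 2}$, $\genextvb = \trivbundle[\genlocfield,\FFfield](3/2)^{\oplus 2}$, and $\genpolygon$ with slope tuple $(3,1,1,1)$. Then $\genpolygon$ has the same endpoints as $\HN(\genextvb)$, its partial sums $(3,4,5,6)$ dominate the partial sums $(3/2,3,9/2,6)$ of $\HN(\genextvb)$, so $\genpolygon \geq \HN(\genextvb)$; and for every index $i$ with $\HNslope{\genpolygon}{i}<3/2$ (resp.\ $>3/2$) the value $1$ (resp.\ $3$) occurs among the slopes of $\gensubvb$ (resp.\ $\genquotvb$), so the extensional version of Definition \ref{suitable permutation of HN polygon for extension}(ii) is satisfied. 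Yet no $\genpolygon$-partition pair exists: $S_\genquotvb$ would have to consist of three indices with slope tuple $(3,1,1)$, all with $\HNslope{\genpolygon}{j} \geq 3/2$, but two of those slopes equal $1$. (Independently, Theorem \ref{classification of extensions, general case} shows no such extension exists, since any $\genextvb_1$ with $\gensubvb \subset \genextvb_1 \subset \genextvb$ and $\genextvb_1/\gensubvb \simeq \trivbundle[\genlocfield,\FFfield](3)$ would have rank $2$ and degree $4$, contradicting $\HNslope{\HN(\genextvb_1)}{i} \leq 3/2$ from Lemma \ref{necessary condition for HN polygon of subsheaves and quotients}.) So the slope-by-slope multiplicity check you describe at the end of your proposal is genuinely needed and genuinely sufficient on the extensional reading, and the origin-tracking reading --- under which the lemma is tautological --- is the one the paper must intend; it is the one that feeds consistently into the later theorems. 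Your proof, stated under that reading, is correct, and you were right to flag the ambiguity rather than paper over it.
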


\begin{proof}
This is evident by Definition \ref{suitable permutation of HN polygon for extension}. 
\end{proof}

\begin{remark}
The inequalities in Lemma \ref{partition lemma for extension-permutations} are not strict, whereas the inequalities in Definition \ref{suitable permutation of HN polygon for extension} are strict. In fact, the inequalities in Lemma \ref{partition lemma for extension-permutations} are essentially contrapositives of the inequalities in Definition \ref{suitable permutation of HN polygon for extension}. 
\end{remark}

\begin{defn}\label{sorted P-partition pair def}
Let $\gensubvb$, $\genextvb$ and $\genquotvb$ be vector bundles on $\schff_{\genlocfield, \FFfield}$ with
a $(\gensubvb, \genextvb, \genquotvb)$-permutation $\genpolygon$ of $\HN(\gensubvb \oplus \genquotvb)$.
\begin{enumerate}[label=(\arabic*)] 
\item We refer to an ordered pair $(S_\gensubvb, S_\genquotvb)$ as in Lemma \ref{partition lemma for extension-permutations} as a \emph{$\genpolygon$-partition pair}. 
\smallskip

\item Given a $\genpolygon$-partition pair $(S_\gensubvb, S_\genquotvb)$, we say that $\genpolygon$ is \emph{$(S_\gensubvb, S_\genquotvb)$-sorted} if we have 
\[ (\HNslope{\genpolygon}{i})_{i \in S_\gensubvb} = (\HNslope{\HN(\gensubvb)}{i}) \quad \text{ and } \quad (\HNslope{\genpolygon}{j})_{j \in S_\genquotvb} = (\HNslope{\HN(\genquotvb)}{j}).\]

\end{enumerate}
\end{defn}

\begin{remark}
For our work, we will consider tuples of various sizes and/or index sets, as in the two equalities above. When we write an equality of tuples, we always specify the index set for one side to specify the size of tuples. In addition, when we do not specify the index set for a tuple, it means that the index set is take to be maximal (ranging from 1 to the length of the tuple).  
\end{remark}

\begin{lemma}\label{sorted suitable permutation}
Let $\gensubvb$, $\genextvb$ and $\genquotvb$ be vector bundles on $\schff_{\genlocfield, \FFfield}$. Suppose that there exists a $(\gensubvb, \genextvb, \genquotvb)$-permutation $\genpolygon$ of $\HN(\gensubvb \oplus \genquotvb)$ with a $\genpolygon$-partition pair $(S_\gensubvb, S_\genquotvb)$. There exists a $(\gensubvb, \genextvb, \genquotvb)$-permutation $\gensecpolygon$ of $\HN(\gensubvb \oplus \genquotvb)$ with the following properties:
\begin{enumerate}[label=(\roman*)]
\item\label{property of having a fixed partition pair} $(S_\gensubvb, S_\genquotvb)$ is a $\gensecpolygon$-partition pair.
\smallskip

\item\label{property of being sorted} $\gensecpolygon$ is $(S_\gensubvb, S_\genquotvb)$-sorted. 
\end{enumerate}
\end{lemma}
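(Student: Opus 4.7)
The plan is to construct $\gensecpolygon$ directly by reordering the slopes of $\genpolygon$ within each block of the partition. Enumerate $S_\gensubvb = \{i_1 < i_2 < \cdots\}$ and $S_\genquotvb = \{j_1 < j_2 < \cdots\}$, and declare
\[
\HNslope{\gensecpolygon}{i_k} := \HNslope{\HN(\gensubvb)}{k}, \qquad \HNslope{\gensecpolygon}{j_l} := \HNslope{\HN(\genquotvb)}{l}.
\]
This puts the slopes of $\HN(\gensubvb)$ (resp.\ $\HN(\genquotvb)$) at the positions indexed by $S_\gensubvb$ (resp.\ $S_\genquotvb$) in their original, weakly decreasing order. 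Property \ref{property of being sorted} then holds by construction, so the rest of the proof consists in checking that $\gensecpolygon$ is an $\genextvb$-permutation of $\HN(\gensubvb \oplus \genquotvb)$ admitting $(S_\gensubvb, S_\genquotvb)$ as a partition pair.

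The tuple identities in Definition \ref{suitable permutation of HN polygon for extension}(i) and in Lemma \ref{partition lemma for extension-permutations}(ii)--(iii) are built into the definition of $\gensecpolygon$. The dominance $\gensecpolygon \geq \HN(\genextvb)$ is handled by a rearrangement inequality. For any $j$, decompose
\[
\sum_{i \leq j} \HNslope{\gensecpolygon}{i} = \sum_{i_k \leq j} \HNslope{\HN(\gensubvb)}{k} + \sum_{j_l \leq j} \HNslope{\HN(\genquotvb)}{l}.
\]
Each summand is the initial partial sum of a decreasing rearrangement of the slopes of $\genpolygon$ restricted to $S_\gensubvb$, respectively $S_\genquotvb$; since sorting in decreasing order maximizes every initial partial sum among all rearrangements, these sums bound from above the corresponding partial sums for $\genpolygon$. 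Adding them yields $\sum_{i\leq j} \HNslope{\gensecpolygon}{i} \geq \sum_{i\leq j} \HNslope{\genpolygon}{i} \geq \sum_{i\leq j} \HNslope{\HN(\genextvb)}{i}$, where the last inequality is $\genpolygon \geq \HN(\genextvb)$. Since the total sums agree, this gives $\gensecpolygon \geq \HN(\genextvb)$ with the same endpoints.

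The main obstacle is the pointwise condition $\HNslope{\gensecpolygon}{i_k} \leq \HNslope{\HN(\genextvb)}{i_k}$ (and the symmetric one for $S_\genquotvb$), which is subtler because reordering within $S_\gensubvb$ could, a priori, spoil the inequalities that held position-by-position for $\genpolygon$. The plan is to extract this from the following combinatorial fact: \emph{if $(a_k)_{k=1}^m$ and $(b_k)_{k=1}^m$ are both weakly decreasing and some permutation $\sigma$ of $\{1,\dots,m\}$ satisfies $a_{\sigma(k)} \leq b_k$ for all $k$, then already $a_k \leq b_k$ for all $k$.} The proof is a one-line pigeonhole: if $a_k > b_k$, then among $\sigma^{-1}(1), \dots, \sigma^{-1}(k)$ some value $\sigma^{-1}(m) \geq k$ with $m \leq k$ exists, and monotonicity forces $b_k \geq b_m \geq a_{\sigma(m)} \geq a_k$, a contradiction. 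Applying this with $a_k = \HNslope{\HN(\gensubvb)}{k}$ (decreasing by concavity of $\HN(\gensubvb)$) and $b_k = \HNslope{\HN(\genextvb)}{i_k}$ (decreasing because $(i_k)$ is increasing and $\HN(\genextvb)$ is concave), and using the permutation that records how $(\HNslope{\genpolygon}{i_k})$ shuffles $(\HNslope{\HN(\gensubvb)}{k})$, converts the hypothesis on $\genpolygon$ into the desired inequality for $\gensecpolygon$. An analogous argument with inequalities reversed handles $S_\genquotvb$, establishing \ref{property of having a fixed partition pair} and completing the proof.
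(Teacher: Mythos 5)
Your construction-and-verification approach is genuinely different from the paper's. The paper takes $\gensecpolygon$ to be a \emph{maximal} $\genextvb$-permutation admitting $(S_\gensubvb, S_\genquotvb)$ as a partition pair and shows by contradiction that a maximal one is already sorted: if two slopes in the same block sit out of order, swapping them preserves the partition-pair property, preserves $\genextvb$-permutation-ness, and produces a strictly larger polygon, contradicting maximality. You instead write down the sorted polygon explicitly and then verify the three needed facts (the partial-sum dominance $\gensecpolygon \geq \HN(\genextvb)$ via the rearrangement inequality, and the pointwise inequalities on $S_\gensubvb$ and $S_\genquotvb$ via a discrete majorization lemma). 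Both are valid; the paper's exchange argument is shorter and avoids the rearrangement inequality and the majorization lemma, while your version is constructive and perhaps more self-contained.

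There is, however, a small but real error in the chain you give for the combinatorial lemma. With $m \leq k$ and $\sigma^{-1}(m) \geq k$, you write $b_k \geq b_m \geq a_{\sigma(m)} \geq a_k$. The final inequality would require $\sigma(m) \leq k$, but nothing in the setup controls $\sigma(m)$; what you actually know is $\sigma^{-1}(m) \geq k$. The correct chain uses the hypothesis at the index $l := \sigma^{-1}(m)$ rather than at $m$: since $l \geq k$ and $\sigma(l) = m$,
\[
a_k \;\leq\; a_m \;=\; a_{\sigma(l)} \;\leq\; b_l \;\leq\; b_k,
\]
where the first inequality uses $m \leq k$ and the last uses $l \geq k$. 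This contradicts $a_k > b_k$. With this repair the lemma and hence your proof go through; the analogous argument with inequalities reversed handles the $S_\genquotvb$ block.
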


\begin{proof}
Since there are finitely many $(\gensubvb, \genextvb, \genquotvb)$-permutations of $\HN(\gensubvb \oplus \genquotvb)$, we can take a permutation $\gensecpolygon$ of $\HN(\gensubvb \oplus \genquotvb)$ which is maximal among those with property \ref{property of having a fixed partition pair}. We wish to show that $\gensecpolygon$ is $(S_\gensubvb, S_\genquotvb)$-sorted. By concavity of HN polygons, it suffices to prove that the tuples $(\HNslope{\gensecpolygon}{i})_{i \in S_\gensubvb}$ and $(\HNslope{\gensecpolygon}{j})_{i \in S_\genquotvb}$ are sorted in descending order. 

Let us first verify that $(\HNslope{\gensecpolygon}{i})_{i \in S_\gensubvb}$ is sorted in descending order. Suppose for contradiction that there exist integers $a, b \in S_\gensubvb$ with $a<b$ and $\HNslope{\gensecpolygon}{a} < \HNslope{\gensecpolygon}{b}$. We have
\begin{equation}\label{slope inequality for unsorted D} 
\HNslope{\HN(\genextvb)}{b} \leq \HNslope{\HN(\genextvb)}{a} \leq \HNslope{\gensecpolygon}{a} < \HNslope{\gensecpolygon}{b}
\end{equation}
where the first inequality follows from the concavity of $\HN(\genextvb)$. Take $\gensecpolygon'$ to be the rationally tuplar polygon such that the tuple $(\HNslope{\gensecpolygon'}{i})$ swaps the positions of $\HNslope{\gensecpolygon}{a}$ and $\HNslope{\gensecpolygon}{b}$ in the tuple $(\HNslope{\gensecpolygon}{i})$. It follows from the inequality \eqref{slope inequality for unsorted D} that $\gensecpolygon'$ is a $(\gensubvb, \genextvb, \genquotvb)$-permutation of $\HN(\gensubvb \oplus \genquotvb)$ with $\gensecpolygon' \geq \gensecpolygon$ and $\gensecpolygon' \neq \gensecpolygon$ such that $(S_\gensubvb, S_\genquotvb)$ is a $\gensecpolygon''$-partition pair. Hence we obtain a contradiction to the maximality of $\gensecpolygon$ as desired. 

It remains to check that $(\HNslope{\gensecpolygon}{j})_{j \in S_\genquotvb}$ is sorted in descending order. Suppose for contradiction that there exist integers $c, d \in S_\genquotvb$ with $c<d$ and $\HNslope{\gensecpolygon}{c} < \HNslope{\gensecpolygon}{d}$. We have
\begin{equation}\label{slope inequality for unsorted F} 
\HNslope{\gensecpolygon}{c} < \HNslope{\gensecpolygon}{d} \leq \HNslope{\HN(\genextvb)}{d} \leq \HNslope{\HN(\genextvb)}{c}
\end{equation}
where the last inequality follows from the concavity of $\HN(\genextvb)$. Take $\gensecpolygon''$ to be the rationally tuplar polygon such that the tuple $(\HNslope{\gensecpolygon''}{j})$ swaps the positions of $\HNslope{\gensecpolygon}{c}$ and $\HNslope{\gensecpolygon}{d}$ in the tuple $(\HNslope{\gensecpolygon}{j})$. It follows from the inequality \eqref{slope inequality for unsorted F} that $\gensecpolygon''$ is a $(\gensubvb, \genextvb, \genquotvb)$-permutation of $\HN(\gensubvb \oplus \genquotvb)$ with $\gensecpolygon'' \geq \gensecpolygon$ and $\gensecpolygon'' \neq \gensecpolygon$ such that $(S_\gensubvb, S_\genquotvb)$ is a $\gensecpolygon''$-partition pair. Hence we obtain a contradiction to the maximality of $\gensecpolygon$ as desired. 
\end{proof}

\begin{defn}
Let $\gensubvb$, $\genextvb$ and $\genquotvb$ be vector bundles on $\schff_{\genlocfield, \FFfield}$ with a short exact sequence
\[ 0 \longrightarrow \gensubvb \longrightarrow \genextvb \longrightarrow \genquotvb \longrightarrow 0.\]
Let $A$ and $B$ be sets which form a partition of the set $\{1, \cdots, \rk(\genextvb)\}$. An \emph{$(A, B)$-decomposition} of $\genextvb$ is a direct sum decomposition $\genextvb \simeq \genextvb_A \oplus \genextvb_B$ satisfying the following properties:
\begin{enumerate}[label=(\roman*)]
\item We have $(\HNslope{\HN(\genextvb_A)}{i}) = (\HNslope{\HN(\genextvb)}{i}))_{i \in A}$ and $(\HNslope{\HN(\genextvb_B)}{i}) = (\HNslope{\HN(\genextvb)}{i}))_{i \in B}$.
\smallskip

\item The map $\genextvb_B \inj \genextvb \surj \genquotvb$ is injective. 
\end{enumerate}
\end{defn}

\begin{prop}\label{necessary conditions for HN polygon of extensions with integer slopes}
Let $\gensubvb$, $\genextvb$ and $\genquotvb$ be vector bundles on $\schff_{\genlocfield, \FFfield}$ with integer slopes. If there exists a short exact sequence 
\begin{equation}\label{short exact sequence, necessity part integer slopes} 
0 \longrightarrow \gensubvb \longrightarrow \genextvb \longrightarrow \genquotvb \longrightarrow 0,
\end{equation}
then there exists a $(\gensubvb, \genextvb, \genquotvb)$-permutation $\genpolygon$ of $\HN(\gensubvb \oplus \genquotvb)$ with a $\genpolygon$-partition pair $(S_\gensubvb, S_\genquotvb)$ and an $(S_\gensubvb, S_\genquotvb)$-decomposition of $\genextvb$. 
\end{prop}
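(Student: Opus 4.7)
The plan is to argue by induction on $\rk(\genextvb)$, with the trivial base case $\rk(\genextvb)=0$. Throughout, the integer-slope hypothesis lets me write $\gensubvb$, $\genextvb$, $\genquotvb$ as direct sums of line bundles $\Ocal(d)$ for $d\in\Z$. For the inductive step, the overall strategy is to peel off a single line bundle summand of $\genextvb$, classify it as belonging either to the ``subbundle side'' ($S_\gensubvb$) or the ``quotient side'' ($S_\genquotvb$), and thereby reduce to a shorter exact sequence to which the inductive hypothesis applies.

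I propose to focus on the smallest HN slope $\lambda := \mumin(\genextvb)$, which by Lemma~\ref{necessary condition for HN polygon of subsheaves and quotients} satisfies $\lambda \leq \mumin(\genquotvb)$. I split into two cases. When $\lambda = \mumin(\genquotvb)$, the $\Hom$-vanishing from Proposition~\ref{basic properties of stable bundles} forces the composite $\genextvb \twoheadrightarrow \genquotvb \twoheadrightarrow \genquotvb^{=\lambda}$ to factor through $\genextvb^{=\lambda}$ and induces a surjection of the $\Ocal(\lambda)$-isotypic pieces; a linear-algebraic change of basis within $\genextvb^{=\lambda}$ then produces a line-bundle summand mapping isomorphically onto a summand of $\genquotvb^{=\lambda}$, which I split off and assign to $S_\genquotvb$. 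When $\lambda < \mumin(\genquotvb)$, the polygon dominance $\HN(\gensubvb\oplus\genquotvb) \geq \HN(\genextvb)$ forces $\min(\mumin(\gensubvb), \mumin(\genquotvb)) \leq \lambda$, so $\mumin(\gensubvb) \leq \lambda$; I then argue that after a suitable automorphism of $\genextvb$, an $\Ocal(\mumin(\gensubvb))$-summand of $\gensubvb$ can be identified with a subbundle of $\genextvb$ that extends to a direct summand, and split this off, assigning it to $S_\gensubvb$.

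After applying the inductive hypothesis to the reduced exact sequence to obtain data $(\genpolygon', (S'_\gensubvb, S'_\genquotvb))$ and the corresponding decomposition, I extend to the original sequence by inserting the peeled-off slope at the tail of $\genpolygon$ and reassembling the decomposition. The principal technical obstacle is the second case: when $\mumin(\gensubvb) < \lambda$, the image of $\Ocal(\mumin(\gensubvb)) \hookrightarrow \gensubvb \hookrightarrow \genextvb$ is merely a subsheaf (not a direct summand) of $\genextvb$, and producing a compatible direct-summand decomposition of $\genextvb$ aligned with it demands a careful matrix-level argument exploiting the specific structure of the inclusion $\gensubvb \hookrightarrow \genextvb$. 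Verifying that the extended $\genpolygon$ still dominates $\HN(\genextvb)$ and satisfies the partition pair conditions then requires combining the inductive data with a controlled placement of the peeled-off slope at the tail position, using the slope inequalities supplied by Lemma~\ref{necessary condition for HN polygon of subsheaves and quotients}.
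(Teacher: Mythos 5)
Your overall skeleton (induction on $\rk(\genextvb)$, peeling off a rank-one piece attached to the minimum slope $\lambda = \mumin(\genextvb)$, two cases depending on whether the peeled slope goes to $S_\genquotvb$ or $S_\gensubvb$) matches the paper, and your Case~A is essentially the paper's case $\rk(\genred{\genquotvb}) = \rk(\genquotvb)-1$. However, your Case~B has a genuine gap that you gesture at but do not close, and the gesture itself points in a direction that cannot work.

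The problem is the claim that ``an $\Ocal(\mumin(\gensubvb))$-summand of $\gensubvb$ can be identified with a subbundle of $\genextvb$ that extends to a direct summand.'' When $\mumin(\gensubvb) < \lambda = \mumin(\genextvb)$ (which you correctly observe can happen), no direct summand of $\genextvb$ is isomorphic to $\Ocal(\mumin(\gensubvb))$: by Theorem~\ref{existence of HN decomp} the isomorphism classes of direct summands of $\genextvb$ are dictated by the HN decomposition, all of whose slopes are $\geq \lambda$. So no ``careful matrix-level argument'' can produce the object you want, and the inductive step as you describe it cannot be set up. You should not be trying to split $\gensubvb$: the paper instead always splits off $\trivbundle(\lambda)$ from $\genextvb$, writes $\genextvb \simeq \genred{\genextvb} \oplus \trivbundle(\lambda)$, and defines $\genred{\gensubvb}$ as the \emph{preimage} of $\genred{\genextvb}$ in $\gensubvb$ and $\genred{\genquotvb}$ as the \emph{image} of $\genred{\genextvb}$ in $\genquotvb$. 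These are automatically subbundles and fit into a shorter short exact sequence, with no need to realize any piece of $\gensubvb$ as a summand of $\genextvb$. The two cases are then distinguished by whether $\rk(\genred{\genquotvb})$ drops or not, which the snake lemma controls cleanly. In the case corresponding to your Case~B, the required slope inequalities are extracted not by splitting $\gensubvb$ but by observing that the induced maps $\genextvb_\genquotvb \hookrightarrow \genextvb \twoheadrightarrow \genquotvb$ and $\gensubvb \hookrightarrow \genextvb \twoheadrightarrow \genextvb/\genextvb_\genquotvb \cong \genextvb_\gensubvb$ are generic isomorphisms (being injections between bundles of equal rank), so Lemma~\ref{necessary condition for HN polygon of subsheaves and quotients} applies directly. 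This is the key move that your proposal is missing.

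A secondary caution: in Case~B you deduce $\mumin(\gensubvb) \leq \lambda$ from ``the polygon dominance $\HN(\gensubvb\oplus\genquotvb) \geq \HN(\genextvb)$.'' That dominance is not established at this point in the paper (it is essentially a consequence of what you are proving, via Lemma~\ref{minimum slopes of extension-permutations}). It is a true and classical fact and could be proven independently, but as written this step needs justification to avoid circularity. The paper's structure sidesteps the issue entirely: it never needs to decide in advance whether the peeled line bundle goes to $S_\gensubvb$ or $S_\genquotvb$; the preimage/image construction makes that determination automatically.
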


\begin{proof}
%

Let us proceed by induction on $\rk(\genextvb)$. The assertion is trivial when $\genextvb$ is zero. We henceforth assume that $\genextvb$ is nonzero. 
The vector bundle $\genextvb$ admits a decomposition 
\[ \genextvb \simeq \genred{\genextvb} \oplus \trivbundle[\genlocfield, \FFfield](\mumin(\genextvb))\]
where $\genred{\genextvb}$ is a vector bundle on $\schff_{\genlocfield, \FFfield}$ with $\rk(\genred{\genextvb}) = \rk(\genextvb) - 1$; indeed, $\HN(\genred{\genextvb})$ is given by the restriction of $\HN(\genvb)$ on the interval $[0, \rk(\genextvb)-1]$. We write $\genred{\gensubvb}$ for the preimage of $\genred{\genextvb}$ under the map $\gensubvb \longrightarrow \genextvb$, and $\genred{\genquotvb}$ for the image of $\genred{\genextvb}$ under the map $\genextvb \longrightarrow \genquotvb$. Then we have a commutative diagram of short exact sequences
\begin{equation}\label{induction diagram of short exact sequences, necessity part} 
\begin{tikzcd}
0 \arrow[r]& \genred{\gensubvb} \arrow[r]\arrow[d, hookrightarrow]& \genred{\genextvb} \arrow[r]\arrow[d, hookrightarrow]& \genred{\genquotvb} \arrow[r]\arrow[d, hookrightarrow]& 0\\
0 \arrow[r]& \gensubvb \arrow[r]& \genextvb \arrow[r]& \genquotvb \arrow[r]& 0
\end{tikzcd}
\end{equation}
where all vertical maps are injective. By the induction hypothesis, there exists 
a $(\genred{\gensubvb}, \genred{\genextvb}, \genred{\genquotvb})$-permutation $\genred{\genpolygon}$ of $\HN(\genred{\gensubvb} \oplus \genred{\genquotvb})$ with an $\genred{\genpolygon}$-partition pair $(S_{\genred{\gensubvb}}, S_{\genred{\genquotvb}})$ and an $(S_{\genred{\gensubvb}}, S_{\genred{\genquotvb}})$-decomposition $\genred{\genextvb} \simeq \genred{\genextvb}_{\genred{\gensubvb}} \oplus \genred{\genextvb}_{\genred{\genquotvb}}$. In light of Lemma \ref{sorted suitable permutation}, we may assume that $\genred{\genpolygon}$ is $(S_{\genred{\gensubvb}}, S_{\genred{\genquotvb}})$-sorted. In addition, we find that $\rk(\genred{\genquotvb})$ is equal to either $\rk(\genquotvb) -1$ or $\rk(\genquotvb)$.

We consider the case where $\rk(\genred{\genquotvb})$ is equal to $\rk(\genquotvb) -1$. The surjective map $\genextvb \surj \genquotvb$ induces a surjective map $\trivbundle[\genlocfield, \FFfield](\mumin(\genextvb)) \simeq \genextvb/\genred{\genextvb} \surj \genquotvb/\genred{\genquotvb}$, which must be an isomorphism as both $\trivbundle[\genlocfield, \FFfield](\mumin(\genextvb))$ and $\genquotvb/\genred{\genquotvb}$ are of rank $1$. This isomorphism yields a map
\[ \genquotvb/\genred{\genquotvb} \simeq \trivbundle[\genlocfield, \FFfield](\mumin(\genextvb)) \inj \genextvb \surj \genquotvb\]
which splits the short exact sequence
\[ 0 \longrightarrow \genred{\genquotvb} \longrightarrow \genquotvb \longrightarrow \genquotvb/\genred{\genquotvb} \longrightarrow 0\]
and consequently induces a direct sum decomposition
\[ \genquotvb \simeq \genred{\genquotvb} \oplus \genquotvb/\genred{\genquotvb} \simeq \genred{\genquotvb} \oplus \trivbundle[\genlocfield, \FFfield](\mumin(\genextvb)).\]
In particular, we find $\mumin(\genquotvb) \leq \mumin(\genextvb)$. Meanwhile, Lemma \ref{necessary condition for HN polygon of subsheaves and quotients} implies $\mumin(\genquotvb) \geq \mumin(\genextvb)$ as $\genquotvb$ is a quotient of $\genextvb$. Hence we have $\mumin(\genextvb) = \mumin(\genquotvb)$. 
We also obtain an isomorphism $\genred{\gensubvb} \simeq \gensubvb$ by applying the snake lemma to the diagram \eqref{induction diagram of short exact sequences, necessity part}. 
Let us now take $\genpolygon$ to be the rationally tuplar polygon with
\[ \HNslope{\genpolygon}{i} = \begin{cases} \HNslope{\genred{\genpolygon}}{i} & \text{ for } i < \rk(\genextvb), \\  \mumin(\genquotvb) & \text{ for } i = \rk(\genextvb) \end{cases}\]
We also set $S_\gensubvb:= S_{\genred{\gensubvb}}$, $S_\genquotvb:= S_{\genred{\genquotvb}} \cup \{\rk(\genextvb)\}$, $\genextvb_\gensubvb:= \genred{\genextvb}_{\genred{\gensubvb}}$, and $\genextvb_\genquotvb := \genred{\genextvb}_{\genred{\genquotvb}} \oplus \trivbundle[\genlocfield, \FFfield](\mumin(\genextvb))$. It is then straightforward to verify that $\genpolygon$ is a $(\gensubvb, \genextvb, \genquotvb)$-permutation of $\HN(\gensubvb \oplus \genquotvb)$ with a $\genpolygon$-partition pair $(S_\gensubvb, S_\genquotvb)$ and an $(S_\gensubvb, S_\genquotvb)$-decomposition $\genextvb \simeq \genextvb_\gensubvb \oplus \genextvb_\genquotvb$,
as illustrated by Figure \ref{suitable permutation in the split case}. 
\begin{figure}[H]
\begin{tikzpicture}[scale=0.5]	
		\coordinate (left) at (0, 0);
		\coordinate (p1) at (3, 3);
		\coordinate (p2) at (8, 4);
		\coordinate (right) at (10, 2);

		\coordinate (q1) at (2, 4);
		\coordinate (q2) at (5, 5);
		\coordinate (q3) at (6, 4);
		\coordinate (q4) at (7, 5);
		\coordinate (q5) at (9, 4);
				
		\draw[step=1cm,thick, color=mynicegreen] (left) -- (p1) --  (p2) -- (right);
		\draw[step=1cm,thick, color=red] (left) -- (q1) -- (q2);
		\draw[step=1cm,thick, color=orange] (q2) -- (q3);
		\draw[step=1cm,thick, color=blue] (q3) -- (q4) -- (q5) -- (right);

		\draw [fill] (q1) circle [radius=0.05];		
		\draw [fill] (q2) circle [radius=0.05];		
		\draw [fill] (q3) circle [radius=0.05];		
		\draw [fill] (q4) circle [radius=0.05];		
		\draw [fill] (q5) circle [radius=0.05];	
		\draw [fill] (left) circle [radius=0.05];
		\draw [fill] (right) circle [radius=0.05];		
		
		\draw [fill] (p1) circle [radius=0.05];		
		\draw [fill] (p2) circle [radius=0.05];		
		
		\path (q5) ++(1.4, 0.2) node {\color{blue}$\genred{\gensubvb} = \gensubvb$};
		\path (p1) ++(0.2, -0.6) node {\color{mynicegreen}$\genextvb$};
		\path (q1) ++(-0.5, 0.2) node {\color{red}$\genred{\genquotvb}$};
		\path (q2) ++(1, 0.2) node {\color{orange}$\genquotvb/\genred{\genquotvb}$};
\end{tikzpicture}
\hspace{0.3cm}
\begin{tikzpicture}[scale=0.4]
        \pgfmathsetmacro{\textycoordinate}{4}
		\draw[->, line width=0.6pt] (0, \textycoordinate) -- (1.5,\textycoordinate);
		\draw (0,0) circle [radius=0.00];	
\end{tikzpicture}
\hspace{0.3cm}
\begin{tikzpicture}[scale=0.5]
		\coordinate (left) at (0, 0);
		\coordinate (p1) at (3, 3);
		\coordinate (p2) at (8, 4);
		\coordinate (right) at (10, 2);

		\coordinate (q1) at (2, 4);
		\coordinate (q2) at (5, 5);
		\coordinate (q3) at (6, 4);
		\coordinate (q4) at (7, 5);
		\coordinate (q5) at (9, 4);
		
		\coordinate (r1) at (q1);
		\coordinate (r2) at (3, 5);
		\coordinate (r3) at (6, 6);
		\coordinate (r4) at (8, 5);
		\coordinate (r5) at (9, 3);
				
		\draw[step=1cm,thick, color=mynicegreen] (left) -- (p1) --  (p2) -- (r5);
		\draw[step=1cm,thick, color=red] (left) -- (q1);
		
		\draw[step=1cm,thick, color=blue] (r1) -- (r2);
		\draw[step=1cm,thick, color=red] (r2) -- (r3);
		\draw[step=1cm,thick, color=blue] (r3) -- (r4) -- (r5);
		\draw[step=1cm,thick, color=orange] (r5) -- (right);
		
		\draw [fill] (left) circle [radius=0.05];
		\draw [fill] (right) circle [radius=0.05];		
		
		\draw [fill] (p1) circle [radius=0.05];		
		\draw [fill] (p2) circle [radius=0.05];	

		\draw [fill] (r1) circle [radius=0.05];		
		\draw [fill] (r2) circle [radius=0.05];		
		\draw [fill] (r3) circle [radius=0.05];		
		\draw [fill] (r4) circle [radius=0.05];		
		\draw [fill] (r5) circle [radius=0.05];		
		
		\path (p1) ++(0.2, -0.6) node {\color{mynicegreen}$\genred{\genextvb}$};
\end{tikzpicture}
\caption{Construction of $\genpolygon$ in the case $\rk(\genred{\genquotvb}) = \rk(\genquotvb) -1$}\label{suitable permutation in the split case}
\end{figure}

It remains to consider the case where $\rk(\genred{\genquotvb})$ and $\rk(\genquotvb)$ are equal. 
Let us set 
\[S_\gensubvb := S_{\genred{\gensubvb}} \cup \{\rk(\genextvb)\}, \quad S_\genquotvb := S_{\genred{\genquotvb}}, \quad \genextvb_\gensubvb:= \genred{\genextvb}_{\genred{\gensubvb}} \oplus \trivbundle[\genlocfield, \FFfield](\mumin(\genextvb)), \quad \genextvb_\genquotvb:= \genred{\genextvb}_{\genred{\genquotvb}}.\]
We also take $\genpolygon$ to be the rationally tuplar polygon with 
\begin{equation*}\label{definition of suitable permutation in the nonsplit case} 
(\HNslope{\genpolygon}{i})_{i \in S_\gensubvb} = (\HNslope{\HN(\gensubvb)}{i}) \quad \text{ and } \quad (\HNslope{\genpolygon}{i})_{i \in S_\genquotvb} = (\HNslope{\HN(\genquotvb)}{i}).
\end{equation*}
The $(S_{\genred{\gensubvb}}, S_{\genred{\genquotvb}})$-decomposition $\genred{\genextvb} \simeq \genred{\genextvb}_{\genred{\gensubvb}} \oplus \genred{\genextvb}_{\genred{\genquotvb}}$ and the commutative diagram \eqref{induction diagram of short exact sequences, necessity part} together yield
an $(S_\gensubvb, S_\genquotvb)$-decomposition $\genextvb \simeq \genextvb_\gensubvb \oplus \genextvb_\genquotvb$. Moreover, the injective map $\genextvb_\genquotvb \inj \genextvb \surj \genquotvb$ is an isomorphism at the generic point as we have $\rk(\genextvb_\genquotvb) = \rk(\genred{\genextvb}_{\genred{\genquotvb}}) = \rk(\genred{\genquotvb}) = \rk(\genquotvb)$. We then find from the short exact sequence \eqref{short exact sequence, necessity part integer slopes} that the map $\gensubvb \inj \genextvb \surj \genextvb/\genextvb_\genquotvb \cong \genextvb_\gensubvb$ is also an isomorphism at the generic point. Now Lemma \ref{necessary condition for HN polygon of subsheaves and quotients} yields inequalities
\begin{equation*}
\begin{aligned} 
\HNslope{\HN(\gensubvb)}{i} \leq \HNslope{\HN(\genextvb_\gensubvb)}{i} \quad& \text{ for } i = 1, \cdots, \rk(\gensubvb), \\ 
\HNslope{\HN(\genquotvb)}{j} \geq \HNslope{\HN(\genextvb_\genquotvb)}{j} \quad& \text{ for } j = 1, \cdots, \rk(\genquotvb).
\end{aligned}
\end{equation*}
We can rewrite these inequalities as
\begin{equation*}\label{slope dominance inequalities for induction, integer slopes}
\begin{aligned} 
\HNslope{\genpolygon}{i} \leq \HNslope{\HN(\genextvb)}{i}  \quad& \text{ for all } i \in S_\gensubvb, \\ 
\HNslope{\genpolygon}{j} \geq \HNslope{\HN(\genextvb)}{j}  \quad& \text{ for all } j \in S_\genquotvb.
\end{aligned}
\end{equation*}
In addition, since $\genred{\gensubvb}$ and $\genred{\genquotvb}$ are respectively subsheaves of $\gensubvb$ and $\genquotvb$, Lemma \ref{necessary condition for HN polygon of subsheaves and quotients} yields
\begin{equation*}
\begin{aligned} 
\HNslope{\HN(\genred{\gensubvb})}{i} \leq \HNslope{\HN(\gensubvb)}{i} \quad& \text{ for } i = 1, \cdots, \rk(\gensubvb) -1, \\ \HNslope{\HN(\genred{\genquotvb})}{j} \leq \HNslope{\HN(\genquotvb)}{j} \quad& \text{ for } j = 1, \cdots, \rk(\genquotvb).
\end{aligned}
\end{equation*}
As $\genred{\genpolygon}$ is $(S_{\genred{\gensubvb}}, S_{\genred{\genquotvb}})$-sorted, we find $\genpolygon \geq \HN(\genextvb)$. Hence we deduce by Lemma \ref{partition lemma for extension-permutations} that $\genpolygon$ is a $(\gensubvb, \genextvb, \genquotvb)$-permutation of $\HN(\gensubvb \oplus \genquotvb)$ with a $\genpolygon$-partition pair $(S_\gensubvb, S_\genquotvb)$ and an $(S_\gensubvb, S_\genquotvb)$-decomposition $\genextvb \simeq \genextvb_\gensubvb \oplus \genextvb_\genquotvb$, as illustrated by Figure \ref{suitable permutation in the nonsplit case}.
\end{proof}

\begin{figure}[H]
\begin{tikzpicture}[scale=0.5]	
		\coordinate (left) at (0, 0);
		\coordinate (p1) at (3, 3);
		\coordinate (p2) at (7, 4);
		\coordinate (p3) at (10, 3);
		\coordinate (right) at (11, 1);

		\coordinate (q1) at (1, 4);
		\coordinate (q2) at (3, 6);
		\coordinate (q3) at (5, 7);
		\coordinate (q4) at (7, 7);
		\coordinate (q5) at (10, 6);
		
		\coordinate (r1) at (1, 3);
		\coordinate (r2) at (3, 5);
		\coordinate (r3) at (5, 6);
		\coordinate (r4) at (8, 5);
				
		\draw[step=1cm,thick, color=mynicegreen] (left) -- (p1) --  (p2) -- (p3) -- (right);
		\draw[step=1cm,thick, color=red] (left) -- (q1) -- (q2);
		\draw[step=1cm,thick, color=violet] (r2) -- (r3) -- (r4) -- (p3);
		\draw[step=1cm,thick, color=orange] (left) -- (r1) -- (r2);
		\draw[step=1cm,thick, color=blue] (q2) -- (q3) -- (q4) -- (q5) -- (right);

		\draw [fill] (q1) circle [radius=0.05];		
		\draw [fill] (q2) circle [radius=0.05];		
		\draw [fill] (q3) circle [radius=0.05];		
		\draw [fill] (q4) circle [radius=0.05];		
		\draw [fill] (q5) circle [radius=0.05];		
		\draw [fill] (left) circle [radius=0.05];
		\draw [fill] (right) circle [radius=0.05];		
		
		\draw [fill] (p1) circle [radius=0.05];		
		\draw [fill] (p2) circle [radius=0.05];		
		\draw [fill] (p3) circle [radius=0.05];		
		
		\draw [fill] (r1) circle [radius=0.05];		
		\draw [fill] (r2) circle [radius=0.05];		
		\draw [fill] (r3) circle [radius=0.05];		
		\draw [fill] (r4) circle [radius=0.05];		
		
		\path (q5) ++(0.0, 0.5) node {\color{blue}$\gensubvb$};
		\path (p2) ++(0.0, -0.6) node {\color{mynicegreen}$\genextvb$};
		\path (q1) ++(-0.5, 0.2) node {\color{red}$\genquotvb$};
		\path (r1) ++(0.5, -0.2) node {\color{orange}$\genred{\genquotvb}$};
		\path (r4) ++(0.6, 0.2) node {\color{violet}$\genred{\gensubvb}$};
\end{tikzpicture}
\hspace{0.3cm}
\begin{tikzpicture}[scale=0.4]
        \pgfmathsetmacro{\textycoordinate}{5}
		\draw[->, line width=0.6pt] (0, \textycoordinate) -- (1.5,\textycoordinate);
		\draw (0,0) circle [radius=0.00];	
\end{tikzpicture}
\hspace{0.3cm}
\begin{tikzpicture}[scale=0.5]
		\coordinate (left) at (0, 0);
		\coordinate (p1) at (3, 3);
		\coordinate (p2) at (7, 4);
		\coordinate (p3) at (10, 3);
		\coordinate (right) at (11, 1);

		\coordinate (q1) at (1, 4);
		\coordinate (q2) at (3, 6);
		\coordinate (q3) at (5, 7);
		\coordinate (q4) at (7, 7);
		\coordinate (q5) at (10, 6);
		
		\coordinate (r1) at (1, 3);
		\coordinate (r2) at (3, 5);
		\coordinate (r3) at (5, 6);
		\coordinate (r4) at (8, 5);
		
		\coordinate (s1) at (q1);
		\coordinate (s2) at (3, 5);
		\coordinate (s3) at (q3);
		\coordinate (s4) at (q4);
		\coordinate (s5) at (q5);
		
		\coordinate (t1) at (r1);
		\coordinate (t2) at (3, 4);
		\coordinate (t3) at (r3);
		\coordinate (t4) at (r4);
				
		\draw[step=1cm,thick, color=mynicegreen] (left) -- (p1) --  (p2) -- (p3) -- (right);
		\draw[step=1cm,thick, color=red] (left) -- (s1);
		\draw[step=1cm,thick, color=orange] (left) -- (t1);
		\draw[step=1cm,thick, color=violet] (t1) -- (t2);
		\draw[step=1cm,thick, color=orange] (t2) -- (t3);
		\draw[step=1cm,thick, color=violet] (t3) -- (t4) -- (p3);
		
		\draw[step=1cm,thick, color=blue] (s1) -- (s2);
		\draw[step=1cm,thick, color=red] (s2) -- (s3);
		\draw[step=1cm,thick, color=blue] (s3) -- (s4) -- (s5) -- (right);
		
		\draw [fill] (left) circle [radius=0.05];
		\draw [fill] (right) circle [radius=0.05];		
		
		\draw [fill] (p1) circle [radius=0.05];		
		\draw [fill] (p2) circle [radius=0.05];	
		\draw [fill] (p3) circle [radius=0.05];	
		
		\draw [fill] (s1) circle [radius=0.05];		
		\draw [fill] (s2) circle [radius=0.05];		
		\draw [fill] (s3) circle [radius=0.05];		
		\draw [fill] (s4) circle [radius=0.05];	
		\draw [fill] (s5) circle [radius=0.05];	
		
		\draw [fill] (t1) circle [radius=0.05];		
		\draw [fill] (t2) circle [radius=0.05];		
		\draw [fill] (t3) circle [radius=0.05];		
		\draw [fill] (t4) circle [radius=0.05];			
		
\end{tikzpicture}
\caption{Construction of $\genpolygon$ in the case $\rk(\genred{\genquotvb}) = \rk(\genquotvb)$}\label{suitable permutation in the nonsplit case}
\end{figure}

\begin{theorem}\label{necessary conditions for HN polygon of extensions}
Let $\gensubvb$, $\genextvb$ and $\genquotvb$ be vector bundles on $\schff_{\genlocfield, \FFfield}$ with a short exact sequence
\begin{equation}\label{short exact sequence, necessity part} 
0 \longrightarrow \gensubvb \longrightarrow \genextvb \longrightarrow \genquotvb \longrightarrow 0.
\end{equation}
There exists 
a $(\gensubvb, \genextvb, \genquotvb)$-permutation of $\HN(\gensubvb \oplus \genquotvb)$. 
\end{theorem}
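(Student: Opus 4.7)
The plan is to reduce the theorem to the integer-slope case already established in Proposition \ref{necessary conditions for HN polygon of extensions with integer slopes}, via unramified base change. Concretely, I would choose a positive integer $d$ divisible by the denominator of every HN slope of $\gensubvb$, $\genextvb$, and $\genquotvb$ (written in lowest terms), and let $\pi \colon \schff_{\genlocfield', \FFfield} \to \schff_{\genlocfield, \FFfield}$ denote the projection associated to the unramified extension $\genlocfield'$ of $\genlocfield$ of degree $d$, as in Proposition \ref{unramified covers of FF curve}.

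Since $\pi$ is flat, pulling back \eqref{short exact sequence, necessity part} along $\pi$ yields a short exact sequence
\[ 0 \longrightarrow \pi^*\gensubvb \longrightarrow \pi^*\genextvb \longrightarrow \pi^*\genquotvb \longrightarrow 0\]
of vector bundles on $\schff_{\genlocfield', \FFfield}$. By Proposition \ref{unramified covers of FF curve} and our choice of $d$, pullback along $\pi$ preserves ranks and sends each $\trivbundle[\genlocfield, \FFfield](\lambda)$ to a direct sum of copies of $\trivbundle[\genlocfield', \FFfield](d\lambda)$; in particular, for each $\Gcal \in \{\gensubvb, \genextvb, \genquotvb\}$ the pullback $\pi^*\Gcal$ has only integer slopes, and a direct bookkeeping on HN decompositions shows that
\[ \HNslope{\HN(\pi^*\Gcal)}{i} = d \cdot \HNslope{\HN(\Gcal)}{i} \quad\quad \text{ for each } i = 1, \cdots, \rk(\Gcal).\]
Applying Proposition \ref{necessary conditions for HN polygon of extensions with integer slopes} to this pulled-back sequence then yields a $\pi^*\genextvb$-permutation $\genpolygon'$ of $\HN(\pi^*\gensubvb \oplus \pi^*\genquotvb)$.

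To finish, I would descend $\genpolygon'$ by defining a rationally tuplar polygon $\genpolygon$ on $[0, \rk(\genextvb)]$ via $\HNslope{\genpolygon}{i} := \HNslope{\genpolygon'}{i}/d$, and verify the conditions of Definition \ref{suitable permutation of HN polygon for extension}. Each condition transfers from the corresponding one for $\genpolygon'$ by dividing slopes (and hence all partial sums and endpoint heights) by $d$: the permutation statement follows because the slope multiset of $\HN(\pi^*\gensubvb \oplus \pi^*\genquotvb)$ equals $d$ times the slope multiset of $\HN(\gensubvb \oplus \genquotvb)$; the endpoint match and the dominance $\genpolygon \geq \HN(\genextvb)$ follow from $\genpolygon' \geq \HN(\pi^*\genextvb)$; and the slope-occurrence conditions hold because a rational $\lambda$ occurs as a slope of $\Gcal$ if and only if $d\lambda$ occurs as a slope of $\pi^*\Gcal$. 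The main (and essentially only) obstacle is this bookkeeping in the descent step---keeping track of how ranks, horizontal multiplicities, and slopes transform under $\pi^*$---but all of it is immediate from Proposition \ref{unramified covers of FF curve}, so no genuinely new ideas beyond the integer-slope case are needed.
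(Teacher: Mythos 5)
Your proposal is correct and follows essentially the same route as the paper: choose $d$ clearing all slope denominators, pull back along the degree-$d$ unramified base change $\pi$ (using flatness to preserve exactness and Proposition \ref{unramified covers of FF curve} to scale slopes by $d$), invoke Proposition \ref{necessary conditions for HN polygon of extensions with integer slopes}, and divide all slopes of the resulting $\pi^*\genextvb$-permutation by $d$ to descend. The bookkeeping in your final paragraph is exactly what the paper leaves implicit, so no gap.
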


\begin{proof}
Take an integer $d \neq 0$ such that $d \lambda$ is an integer for each slope $\lambda$ of $\gensubvb$, $\genextvb$, and $\genquotvb$. Let $\genlocfield'$ be the unramified extension of $\genlocfield$ of degree $d$. Proposition \ref{unramified covers of FF curve} yields a projection map 
\[ \pi: \schff_{\genlocfield', \FFfield} \cong \schff_{\genlocfield, \FFfield} \times_{\Spec(\genlocfield)} \Spec(\genlocfield') \longrightarrow \schff_{\genlocfield, \FFfield}\]
such that every vector bundle $\genvb$ on $\schff_{\genlocfield, \FFfield}$ satisfies 
\[\HNslope{\HN(\pi^*\genvb)}{i} = d \cdot \HNslope{\HN(\genvb)}{i} \quad\quad \text{ for } i = 1, \cdots, \rk(\genvb).\]
In particular, all slopes of $\pi^*\gensubvb$, $\pi^*\genextvb$, and $\pi^*\genquotvb$ are integers. Moreover, since $\pi$ is evidently flat, the exact sequence \eqref{short exact sequence, necessity part} gives rise to a short exact sequence
\[ 0 \longrightarrow \pi^*\gensubvb \longrightarrow \pi^*\genextvb \longrightarrow \pi^*\genquotvb \longrightarrow 0.\]
By Proposition \ref{necessary conditions for HN polygon of extensions with integer slopes}, there exists 
a $(\pi^*\gensubvb, \pi^*\genextvb, \pi^*\genquotvb)$-permutation $\genpolygon'$ of $\HN(\pi^*\gensubvb \oplus \pi^*\genquotvb)$. Hence we get 
a $(\gensubvb, \genextvb, \genquotvb)$-permutation $\genpolygon$ of $\HN(\gensubvb \oplus \genquotvb)$ with
\[\HNslope{\genpolygon}{i} = \HNslope{\genpolygon'}{i}/d \quad\quad \text{ for } i = 1, \cdots, \rk(\genextvb),\]
thereby completing the proof. 
\end{proof}



\section{Classification theorems for extensions}

In this section, we establish two classification theorems for extensions of vector bundles on 
the Fargues-Fontaine curve. 

\begin{lemma}\label{minimum slopes of extension-permutations}
Let $\gensubvb$, $\genextvb$ and $\genquotvb$ be vector bundles on $\schff_{\genlocfield, \FFfield}$. Suppose that there exists a $(\gensubvb, \genextvb, \genquotvb)$-permutation $\genpolygon$ of $\HN(\gensubvb \oplus \genquotvb)$. 
\begin{enumerate}[label=(\arabic*)]
\item We have $\HN(\gensubvb \oplus \genquotvb) \geq \genpolygon \geq \HN(\genextvb)$
\smallskip

\item $\genquotvb^{< \mumin(\gensubvb)}$ is a direct summand of $\genextvb$. 
\end{enumerate}
\end{lemma}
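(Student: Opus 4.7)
The plan is to treat parts~(1) and~(2) separately, using Lemma~\ref{sorted suitable permutation} to sharpen the $\genextvb$-permutation when proving part~(2). Part~(1) is essentially combinatorial: $\genpolygon \geq \HN(\genextvb)$ is immediate from the definition of an $\genextvb$-permutation, and $\HN(\gensubvb \oplus \genquotvb) \geq \genpolygon$ follows from the standard fact that arranging a multiset of numbers in descending order maximizes every initial partial sum over all rearrangements---since both polygons have the same slope multiset (that of $\gensubvb \oplus \genquotvb$) and the same endpoints, the concave $\HN(\gensubvb \oplus \genquotvb)$ dominates any permutation.

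For part~(2), I first invoke Lemma~\ref{sorted suitable permutation} to assume $\genpolygon$ is $(S_\gensubvb, S_\genquotvb)$-sorted for some $\genpolygon$-partition pair. Set $\mu_0 = \mumin(\gensubvb)$, $e = \rk(\genextvb^{\geq \mu_0})$, $M = \rk(\genextvb^{<\mu_0})$, $r_{<} = \rk(\genquotvb^{<\mu_0})$, and $\delta = M - r_{<}$. The key structural observation is that $S_\gensubvb \subseteq \{1, \ldots, e\}$: for $i \in S_\gensubvb$ the partition-pair condition gives $\HNslope{\genpolygon}{i} \leq \HNslope{\HN(\genextvb)}{i}$, while $\HNslope{\genpolygon}{i}$ is a slope of $\gensubvb$ and therefore $\geq \mu_0$; if $i > e$ then $\HNslope{\HN(\genextvb)}{i} < \mu_0$ by concavity of $\HN(\genextvb)$, a contradiction. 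Consequently $\{e+1, \ldots, \rk(\genextvb)\} \subseteq S_\genquotvb$ as its last $M$ entries, so by sortedness $\HNslope{\genpolygon}{e+\delta+j} = \beta_j$ for $j = 1, \ldots, r_{<}$, where $\beta_1 \geq \cdots \geq \beta_{r_{<}}$ are the slopes of $\genquotvb^{<\mu_0}$ listed in decreasing order.

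Letting $\alpha_1 \geq \cdots \geq \alpha_M$ denote the slopes of $\genextvb^{<\mu_0}$, I then extract two opposing inequalities: the partition-pair condition applied pointwise at $i = e + \delta + j \in S_\genquotvb$ gives $\beta_j \geq \alpha_{\delta+j}$, whereas the dominance $\genpolygon \geq \HN(\genextvb)$ with matching endpoints, applied at index $e+\delta$, yields the reverse partial sum $\sum_{j=1}^{r_{<}} \beta_j \leq \sum_{j=1}^{r_{<}} \alpha_{\delta+j}$. Combining these forces $\beta_j = \alpha_{\delta+j}$ for every $j$, so the slopes of $\genquotvb^{<\mu_0}$ coincide with the smallest $r_{<}$ slopes of $\genextvb^{<\mu_0}$, and the uniqueness of the HN decomposition (Theorem~\ref{existence of HN decomp}) identifies $\genquotvb^{<\mu_0}$ as a direct summand of $\genextvb^{<\mu_0}$, hence of $\genextvb$. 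The crux of the argument is arranging for the pointwise lower bound (from the partition-pair condition on $S_\genquotvb$) and the global upper partial-sum bound (from dominance) to go in exactly opposite directions on the same $r_{<}$ positions; this collision is orchestrated by the sorted identification of the positions $\{e+\delta+1, \ldots, \rk(\genextvb)\}$ as those carrying precisely the slopes of $\genquotvb^{<\mu_0}$.
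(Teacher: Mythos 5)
Your proof is correct in substance but takes a genuinely different route from the paper for part~(2). The paper argues by contradiction: assuming $\genquotvb^{<\mumin(\gensubvb)}$ is not a direct summand of $\genextvb$, it takes the largest index $\auxindex$ where $\HN(\genextvb)$ and $\HN(\gensubvb\oplus\genquotvb)$ have different slopes, picks a rational $\mu$ strictly between them, and then derives a contradiction by comparing $\rk(\genextvb^{<\mu})$ with $\rk(\genquotvb^{<\mu})$. That argument never invokes Lemma~\ref{sorted suitable permutation}; it works directly with the concavity comparison of the two HN polygons, with the $\genextvb$-permutation entering only at the very end to bound the ranks. Your argument instead sorts the permutation first, locates $S_\gensubvb$ inside the initial block $\{1,\dots,e\}$, forces the tail positions into $S_\genquotvb$, and then plays a pointwise lower bound (from the partition condition on $S_\genquotvb$) against an aggregate upper bound (from dominance with equal endpoints) to squeeze the last $r_<$ slopes of $\genextvb$ into agreeing exactly with those of $\genquotvb^{<\mu_0}$. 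Your version is more constructive and arguably easier to audit, at the cost of leaning on the sortedness lemma; the paper's is shorter on machinery but a bit more delicate about where the two polygons first separate.

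One point you should state explicitly: your bookkeeping requires $\delta = M - r_< \geq 0$, i.e.\ $\rk(\genextvb^{<\mu_0}) \geq \rk(\genquotvb^{<\mu_0})$; without this, writing $\beta_j = \alpha_{\delta+j}$ and invoking the partition condition at index $e+\delta+j$ both break down. This inequality does follow from the same observation you already make for $S_\gensubvb$: any slope of $\genquotvb$ that is $<\mu_0$ must be carried by a position $i\in S_\genquotvb$, and if $i\leq e$ then $\HNslope{\genpolygon}{i}<\mu_0\leq\HNslope{\HN(\genextvb)}{i}$ violates the partition condition for $S_\genquotvb$, so all $r_<$ such slopes occupy positions $>e$, forcing $r_<\leq M$. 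Adding that one line closes the gap. Finally, when you pass from ``the slope multiset of $\genquotvb^{<\mu_0}$ equals $\{\alpha_{\delta+1},\dots,\alpha_M\}$'' to ``direct summand,'' it is worth flagging that for each rational slope $\nu=d/r$ (lowest terms), the HN-polygon multiplicity is automatically a multiple of $r$ for any vector bundle, which is what makes the sub-multiset statement translate cleanly into a direct summand via the uniqueness in Theorem~\ref{existence of HN decomp}.
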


\begin{proof}
The first statement is evident by the concavity of $\HN(\gensubvb \oplus \genquotvb)$. Let us now consider the second statement. Suppose for the sake of contradiction that $\genquotvb^{< \mumin(\gensubvb)}$ is not a direct summand of $\genextvb$. 
By the concavity of $\HN(\gensubvb \oplus \genquotvb)$, we find
\[ (\HNslope{\HN(\gensubvb \oplus \genquotvb)}{i})_{i > \rk(\genextvb) - \rk(\genquotvb^{< \mumin(\gensubvb)})} = (\HNslope{\HN(\genquotvb^{< \mumin(\gensubvb)})}{i}).\]
Take $\auxindex$ to be the largest integer with $\HNslope{\HN(\genextvb)}{\auxindex} \neq \HNslope{\HN(\gensubvb \oplus \genquotvb)}{\auxindex}$. Then we must have $\auxindex > \rk(\genextvb) - \rk(\genquotvb^{< \mumin(\gensubvb)})$ and $\HNslope{\HN(\genextvb)}{\auxindex} < \HNslope{\HN(\gensubvb \oplus \genquotvb)}{\auxindex}$. Let us choose a rational number $\mu$ with $\HNslope{\HN(\genextvb)}{\auxindex} < \mu <\HNslope{\HN(\gensubvb \oplus \genquotvb)}{\auxindex}$. By concavity of HN polygons we find
\[ \rk(\genextvb^{< \mu}) = \rk(\genextvb) - \auxindex \quad \text{ and } \quad \rk(\genquotvb^{<\mu}) >\rk(\genextvb) - \auxindex.\]
However, we must have $\rk(\genextvb^{< \mu}) \leq \rk(\genquotvb^{<\mu})$ as there is a $(\gensubvb, \genextvb, \genquotvb)$-permutation of $\HN(\gensubvb \oplus \genquotvb)$. We thus have a desired contradiction, thereby completing the proof. 
\end{proof}

\begin{prop}[{\cite[Theorem 1.1.2]{Arizona_extvb}}]\label{classification of extensions, semistable kernel and cokernel}
Let $\gensubvb$, $\genextvb$ and $\genquotvb$ be vector bundles on $\schff_{\genlocfield, \FFfield}$ such that $\gensubvb$ and $\genquotvb$ are semistable with $\mu(\gensubvb) \leq \mu(\genquotvb)$. There exists a short exact sequence 
\[ 0 \longrightarrow \gensubvb \longrightarrow \genextvb \longrightarrow \genquotvb \longrightarrow 0\]
if and only if we have $\HN(\gensubvb \oplus \genquotvb) \geq \HN(\genextvb)$. 
\end{prop}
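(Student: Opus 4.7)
The necessity direction follows immediately from what has already been established: given a short exact sequence, Theorem \ref{necessary conditions for HN polygon of extensions} yields an $\genextvb$-permutation $\genpolygon$ of $\HN(\gensubvb \oplus \genquotvb)$, and part (1) of Lemma \ref{minimum slopes of extension-permutations} then gives $\HN(\gensubvb \oplus \genquotvb) \geq \genpolygon \geq \HN(\genextvb)$, as required.

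For the sufficiency direction, the plan is first to reduce to the case of integer slopes. Choose $d > 0$ so that $d\mu(\gensubvb)$ and $d\mu(\genquotvb)$ are both integers, let $\genlocfield'/\genlocfield$ be the unramified extension of degree $d$, and consider the projection $\pi \colon \schff_{\genlocfield',\FFfield} \to \schff_{\genlocfield,\FFfield}$ of Proposition \ref{unramified covers of FF curve}. Since $\pi$ is finite flat, producing a short exact sequence on $\schff_{\genlocfield,\FFfield}$ is equivalent to producing a $\Gal(\genlocfield'/\genlocfield)$-equivariant one on $\schff_{\genlocfield',\FFfield}$, and pullback multiplies every slope by $d$. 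So I may assume $\gensubvb \simeq \Ocal(\lambda)^{\oplus m}$ and $\genquotvb \simeq \Ocal(\mu)^{\oplus n}$ with $\lambda \leq \mu$ integers.

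From here I would induct on $m + n$, peeling off matching slopes at the extremes. If $\mumax(\genextvb) = \mu$, then $\Ocal(\mu)$ splits off as a direct summand of $\genextvb$; since $\genquotvb = \Ocal(\mu)^{\oplus n}$ it is immediate that the reduced triple $(\gensubvb, \genextvb/\Ocal(\mu), \genquotvb/\Ocal(\mu))$ still satisfies the dominance hypothesis by a direct inspection of polygons, and the inductive hypothesis produces a sequence which one then direct-sums with the trivial one $0 \to 0 \to \Ocal(\mu) \to \Ocal(\mu) \to 0$. The case $\mumin(\genextvb) = \lambda$ is symmetric, splitting off $\Ocal(\lambda)$ from the bottom of $\gensubvb$. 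Otherwise every slope of $\genextvb$ lies strictly in the open interval $(\lambda, \mu)$, and the extension must genuinely mix the two slope classes.

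The main obstacle is exactly this last case. My plan is to exploit the fact that $\Ext^1(\Ocal(\mu), \Ocal(\lambda)) \cong H^1(\schff_{\genlocfield,\FFfield}, \Ocal(\lambda-\mu))$ is nontrivial by Proposition \ref{basic properties of stable bundles}(4), so that extensions of $\genquotvb$ by $\gensubvb$ are parameterized by $n \times m$ matrices with entries in this group, and to identify the HN polygon of the middle term with a combinatorial invariant (a rank profile) of the matrix. The heart of the matter is to show that every rank profile compatible with the dominance condition $\HN(\gensubvb \oplus \genquotvb) \geq \HN(\genextvb)$ is realized by some explicit class; this is the content of \cite[Theorem 1.1.2]{Arizona_extvb}, which I would invoke at this stage rather than reprove from scratch.
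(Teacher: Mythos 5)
The paper does not offer a proof of this proposition at all: it is imported wholesale as \cite[Theorem 1.1.2]{Arizona_extvb} (the citation appears in the theorem header itself), and the only argument the paper supplies is the one-line remark that the boundary case $\mu(\gensubvb) = \mu(\genquotvb)$ --- not explicitly treated in the cited source --- follows from Proposition \ref{basic properties of stable bundles}(4), since the relevant $\Ext^1$ then vanishes and $\genextvb$ must split as $\gensubvb \oplus \genquotvb$. Your necessity argument via Theorem \ref{necessary conditions for HN polygon of extensions} and Lemma \ref{minimum slopes of extension-permutations}(1) is correct and non-circular (both appear earlier and don't depend on this proposition), though it re-derives something that is in any case part of the cited result.

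The sufficiency sketch has a real gap and, more importantly, ends up circular. Your reduction to integer slopes pulls the data up along $\pi\colon \schff_{\genlocfield',\FFfield} \to \schff_{\genlocfield,\FFfield}$, but unlike the necessity direction (which is used this way in Theorem \ref{necessary conditions for HN polygon of extensions}), sufficiency requires going back \emph{down}: given an exact sequence $0 \to \pi^*\gensubvb \to \Ecal' \to \pi^*\genquotvb \to 0$ with $\Ecal' \simeq \pi^*\genextvb$, you cannot descend it to $\schff_{\genlocfield,\FFfield}$ without first endowing it with a $\Gal(\genlocfield'/\genlocfield)$-equivariant structure. Your claim that producing the original sequence ``is equivalent to producing a Galois-equivariant one on the cover'' is true in the sense of descent theory, but it silently shifts the problem to showing that a Galois-equivariant extension with the right middle term exists, which is strictly harder than the bare existence statement on the cover and is not addressed. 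And even if that were repaired, the argument terminates by invoking \cite[Theorem 1.1.2]{Arizona_extvb} in the integer-slope, mixed-extension case --- that is, by citing the very theorem you are trying to prove, so the long detour adds no logical content over the paper's bare citation. The remaining correct but superfluous part is the peeling-off induction when $\mumax(\genextvb)$ or $\mumin(\genextvb)$ matches $\mu$ or $\lambda$; that reduction is sound, but it is neither needed (the citation covers it) nor sufficient to reach the core case.
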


\begin{remark}
While the cited result \cite[Theorem 1.1.2]{Arizona_extvb} does not explicitly consider the case where $\mu(\gensubvb)$ and $\mu(\genquotvb)$ are equal, this case follows immediately from Proposition \ref{basic properties of stable bundles}.
\end{remark}

\begin{theorem}\label{classification of extensions, semistable case}
Let $\gensubvb$, $\genextvb$ and $\genquotvb$ be vector bundles on $\schff_{\genlocfield, \FFfield}$ such that either $\gensubvb$ or $\genquotvb$ is semistable. There exists a short exact sequence 
\begin{equation}\label{short exact sequence, semistable case}
0 \longrightarrow \gensubvb \longrightarrow \genextvb \longrightarrow \genquotvb \longrightarrow 0
\end{equation}
if and only if there exists a $(\gensubvb, \genextvb, \genquotvb)$-permutation of $\HN(\gensubvb \oplus \genquotvb)$.
\end{theorem}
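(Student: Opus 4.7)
The ``only if'' direction is immediate from Theorem~\ref{necessary conditions for HN polygon of extensions}, so the work lies in the ``if'' direction. The plan is to reduce to the case covered by Proposition~\ref{classification of extensions, semistable kernel and cokernel} through two successive reductions followed by an induction.

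First, I would invoke the duality $\genvb \mapsto \genvb^\vee$, which by Proposition~\ref{basic properties of stable bundles}(2) negates slopes, preserves semistability, and interchanges sub-objects with quotients in any short exact sequence. This allows us to assume throughout that $\gensubvb$ is semistable of some slope $\mu := \mu(\gensubvb)$. Next, by Lemma~\ref{minimum slopes of extension-permutations}(2), the summand $\genquotvb^{<\mu}$ splits off as a direct summand of $\genextvb$. Writing $\genextvb \simeq \genextvb_0 \oplus \genquotvb^{<\mu}$, any short exact sequence $0 \to \gensubvb \to \genextvb_0 \to \genquotvb^{\geq\mu} \to 0$ direct-sums with $\mathrm{id}_{\genquotvb^{<\mu}}$ to yield the desired one; a truncation of the given $\genextvb$-permutation produces an $\genextvb_0$-permutation of $\HN(\gensubvb \oplus \genquotvb^{\geq\mu})$, reducing to the situation $\mumin(\genquotvb) \geq \mu$.

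I would then induct on the number $s$ of distinct Harder--Narasimhan slopes of $\genquotvb$. For $s=1$, $\genquotvb$ is semistable with $\mu(\genquotvb) \geq \mu$, and Proposition~\ref{classification of extensions, semistable kernel and cokernel} applies directly. For $s \geq 2$, set $\lambda := \mumin(\genquotvb)$ and decompose $\genquotvb \simeq \genquotvb' \oplus \trivbundle[\genlocfield, \FFfield](\lambda)^{\oplus m}$, so that $\genquotvb'$ has $s-1$ distinct slopes. The plan is to construct an intermediate bundle $\genvb$ fitting into
\[
0 \to \gensubvb \to \genvb \to \genquotvb' \to 0 \quad\text{and}\quad 0 \to \genvb \to \genextvb \to \trivbundle[\genlocfield, \FFfield](\lambda)^{\oplus m} \to 0,
\]
which splice to the desired sequence. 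Using Lemma~\ref{sorted suitable permutation}, I would take the given $\genextvb$-permutation $\genpolygon$ to be $(S_\gensubvb, S_\genquotvb)$-sorted, partition $S_\genquotvb = S_\genquotvb' \sqcup S_\genquotvb^\lambda$ according to which positions carry the slope $\lambda$, and prescribe the HN polygon of $\genvb$ by deleting the $S_\genquotvb^\lambda$-indexed slopes from $\genpolygon$. The first sequence is then produced by the inductive hypothesis applied to $(\gensubvb, \genvb, \genquotvb')$, while the second is an instance of the theorem whose quotient $\trivbundle[\genlocfield, \FFfield](\lambda)^{\oplus m}$ is semistable---so it falls under the semistable-quotient version of the theorem, equivalent by duality to the semistable-subobject version.

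The main obstacle will be the combinatorial verification, via Lemma~\ref{partition lemma for extension-permutations}, that the prescribed polygon for $\genvb$ simultaneously admits (i) a $\genvb$-permutation of $\HN(\gensubvb \oplus \genquotvb')$ and (ii) an $\genextvb$-permutation of $\HN(\genvb \oplus \trivbundle[\genlocfield, \FFfield](\lambda)^{\oplus m})$. Both checks reduce to dominance inequalities between concave polygons with prescribed slope multisets, and I expect them to follow from the sortedness of $\genpolygon$, the hypothesis $\lambda \geq \mu$, and the concavity of Harder--Narasimhan polygons. To keep the invocation of the semistable-quotient case in the second sequence from becoming circular, I anticipate running a joint induction treating the two hypotheses of the theorem (semistable sub-object and semistable quotient) simultaneously, with a complexity measure such as $\rk(\genextvb)$ paired with the number of distinct slopes of the non-semistable constituent.
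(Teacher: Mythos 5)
Your proposal mirrors the paper's strategy at the macro level — dualize to make $\gensubvb$ semistable, discard $\genquotvb^{<\mu(\gensubvb)}$ via Lemma~\ref{minimum slopes of extension-permutations}(2), and induct on the number of distinct HN slopes of $\genquotvb$ by peeling off the minimal-slope piece $\genquotvb_r$ and constructing an intermediate bundle fitting into two short exact sequences — but there is a genuine gap at the crucial step, namely producing the second short exact sequence $0 \to \genvb \to \genextvb \to \trivbundle[\genlocfield, \FFfield](\lambda)^{\oplus m} \to 0$.

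You propose to obtain this sequence by invoking the theorem itself in the semistable-quotient case, and you correctly flag the circularity risk. But the complexity measure you suggest does not resolve it: $\rk(\genextvb)$ is unchanged between the original problem and the second sequence, and the number of distinct HN slopes of $\genvb$ (whose slope multiset is the union of the single slope $\mu(\gensubvb)$ with the $s-1$ slopes of $\genquotvb'$) can equal $s$, so the secondary measure need not drop either. Dualizing the second sequence to put the semistable term on the sub side gives $\trivbundle[\genlocfield, \FFfield](-\lambda)^{\oplus m} \hookrightarrow \genextvb^\vee \twoheadrightarrow \genvb^\vee$ with the same middle rank and potentially the same slope count, so the loop does not close. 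The paper sidesteps this entirely: rather than calling the theorem recursively, it \emph{chooses} the intermediate bundle $\genred{\genextvb}$ so that $\HN(\genred{\genextvb} \oplus \genquotvb_r)$ is the upper convex hull of $\HN(\genextvb^{>\mu(\genquotvb_r)} \oplus \genquotvb_r)$ and $\HN(\genextvb)$ — this is determined by $\genextvb$ and $\genquotvb_r$ alone, not by the permutation $\genpolygon$ — and then shows that $\genred{\genextvb}$ and $\genextvb$ share a maximal common direct summand $\gensecred{\genextvb}$, the leftover pieces being $\auxsubvb$ (semistable) and $\genthirdred{\genextvb}$ with $\HN(\genquotvb_r \oplus \auxsubvb) \geq \HN(\genthirdred{\genextvb})$ and $\mu(\genquotvb_r) \geq \mu(\auxsubvb)$. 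This puts $0 \to \auxsubvb \to \genthirdred{\genextvb} \to \genquotvb_r \to 0$ squarely in the scope of Proposition~\ref{classification of extensions, semistable kernel and cokernel}, where both the sub and quotient are semistable, so no recursive appeal to Theorem~\ref{classification of extensions, semistable case} is needed. Your polygon for $\genvb$, obtained by deleting the $\lambda$-slopes from the permutation $\genpolygon$, generally differs from $\HN(\genred{\genextvb})$ and does not come with any comparable structural relationship to $\genextvb$, which is why you are forced into the circular recursive call. Absent the paper's specific choice of intermediate bundle (or some other device that produces the second sequence directly), the induction does not terminate and the proof is incomplete.
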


\begin{proof}
Let $\gensubvb^\vee$, $\genextvb^\vee$, and $\genquotvb^\vee$ respectively denote the duals of $\gensubvb$, $\genextvb$ and $\genquotvb$. 
By Proposition \ref{basic properties of stable bundles} and Theorem \ref{existence of HN decomp}, we observe that a vector bundle on $\schff_{\genlocfield, \FFfield}$ is semistable if and only if its dual is semistable, and also find that $\HN(\gensubvb \oplus \genquotvb)$ has a $(\gensubvb, \genextvb, \genquotvb)$-permutation if and only if $\HN(\gensubvb^\vee \oplus \genquotvb^\vee)$ has a $(\gensubvb^\vee, \genextvb^\vee, \genquotvb^\vee)$-permutation. 
Moreover, the existence of a short exact sequence \eqref{short exact sequence, semistable case} is equivalent to the existence of a short exact sequence
\[ 0 \longrightarrow \genquotvb^\vee \longrightarrow \genextvb^\vee \longrightarrow \gensubvb^\vee \longrightarrow 0. \]
 Hence we may assume without loss of generality that $\gensubvb$ is semistable. 

The necessity part is an immediate consequence of Theorem \ref{necessary conditions for HN polygon of extensions}. For the sufficiency part, we henceforth assume that there exists a $(\gensubvb, \genextvb, \genquotvb)$-permutation $\genpolygon$ of $\HN(\gensubvb \oplus \genquotvb)$. 
We also fix a $\genpolygon$-partition pair $(S_\gensubvb, S_\genquotvb)$ and assume in light of Lemma \ref{sorted suitable permutation} that $\genpolygon$ is $(S_\gensubvb, S_\genquotvb)$-sorted. 
Let us write $\numslope$ for the number of distinct slopes in $\HN(\genquotvb)$ and proceed by induction on $\numslope$. 

We first consider the base case where $\HN(\genquotvb)$ is a line segment. Note that $\genquotvb$ is semistable by Theorem \ref{existence of HN decomp}. If we have $\mu(\gensubvb) \leq \mu(\genquotvb)$, then Lemma \ref{minimum slopes of extension-permutations} and Proposition \ref{classification of extensions, semistable kernel and cokernel} together yield a desired short exact sequence \eqref{short exact sequence, semistable case}. If we have $\mu(\gensubvb) > \mu(\genquotvb)$, then Lemma \ref{minimum slopes of extension-permutations} implies that $\genextvb$ is isomorphic to $\gensubvb \oplus \genquotvb$ and thus gives rise to a desired (splitting) exact sequence \eqref{short exact sequence, semistable case}.

For the induction step, we assume from now on that $\HN(\genquotvb)$ has at least two distinct slopes. We have a decomposition
\[ \genquotvb \simeq \bigoplus_{i = 1}^\numslope \genquotvb_i\]
where $\genquotvb_1, \cdots, \genquotvb_\numslope$ are semistable with $\mu(\genquotvb_1) > \cdots > \mu(\genquotvb_\numslope)$, 
and 
thus obtain a decomposition 
\[\genquotvb \simeq \genred{\genquotvb} \oplus \genquotvb_\numslope\] 
where $\genred{\genquotvb}$ is a vector bundle on $\schff_{\genlocfield, \FFfield}$ with $r-1$ distinct slopes in $\HN(\genred{\genquotvb})$ and  $\mumin(\genred{\genquotvb}) > \mu(\genquotvb_r)$.

We consider the case where $\mu(\gensubvb)$ is greater than $\mu(\genquotvb_r)$. The readers may refer to Figure \ref{construction of aux bundles for the degenerate case} for a visual illustration of our argument. Lemma \ref{minimum slopes of extension-permutations} yields a decomposition $\genextvb \simeq \genred{\genextvb} \oplus \genquotvb_r$ for some vector bundle $\genred{\genextvb}$ over $\schff_{\genlocfield, \FFfield}$, and also implies that $\HN(\genextvb)$, $\genpolygon$, and $\HN(\gensubvb \oplus \genquotvb)$ must coincide on the interval $[\rk(\genred{\genextvb}), \rk(\genextvb)]$. Let us take $\genred{\genpolygon}$ to be the restriction of $\genpolygon$ on the interval $[0, \rk(\genred{\genextvb})]$. Then $\genred{\genpolygon}$ is a $(\gensubvb, \genred{\genextvb}, \genred{\genquotvb})$-permutation of $\HN(\gensubvb \oplus \genred{\genquotvb})$, and thus gives rise to a short exact sequence
\[ 0 \longrightarrow \gensubvb \longrightarrow \genred{\genextvb} \longrightarrow \genred{\genquotvb} \longrightarrow 0\]
by the induction hypothesis. Now we get a desired exact sequence \eqref{short exact sequence, semistable case} using the decompositions $\genextvb \simeq \genred{\genextvb} \oplus \genquotvb_r$ and $\genquotvb \simeq \genred{\genquotvb} \oplus \genquotvb_\numslope$.
\begin{figure}[H]
\begin{tikzpicture}[scale=0.5]	
		\pgfmathsetmacro{\rightx}{11}
		\pgfmathsetmacro{\righty}{1}
		
		\pgfmathsetmacro{\eonex}{3}
		\pgfmathsetmacro{\eoney}{3}
		\pgfmathsetmacro{\etwox}{4}
		\pgfmathsetmacro{\etwoy}{1}
		\pgfmathsetmacro{\ethreex}{3}
		\pgfmathsetmacro{\ethreey}{-1}
		
		\pgfmathsetmacro{\fonex}{2}		
		\pgfmathsetmacro{\ftwox}{3.5}
		\pgfmathsetmacro{\ftwoy}{1.5}
		\pgfmathsetmacro{\fthreex}{1}
		\pgfmathsetmacro{\fthreey}{-2}
		
		\pgfmathsetmacro{\pyoffset}{1}
		
		\pgfmathsetmacro{\dslope}{(\eoney+\pyoffset+\ftwoy+\fthreey-\righty)/(\eonex+\ftwox+\fthreex-\rightx)}
		
		\pgfmathsetmacro{\foney}{\eoney+\pyoffset-(\eonex-\fonex)*\dslope}		
				
		\coordinate (left) at (0, 0);
		\coordinate (p1) at (\eonex, \eoney);
		\coordinate (p2) at (\eonex+\etwox, \eoney+\etwoy);
		\coordinate (p3) at (\eonex+\etwox+\ethreex, \eoney+\etwoy+\ethreey);
		\coordinate (right) at (\rightx, \righty);

		\coordinate (q1) at (\fonex, \foney);
		\coordinate (q2) at (\fonex+\ftwox, \foney+\ftwoy);
		
		\draw[step=1cm,thick, color=mynicegreen] (left) -- (p1) --  (p2) -- (p3);
		\draw[step=1cm,thick, color=red] (left) -- (q1);
		\draw[step=1cm,thick, color=red] (q1) -- (q2);
		\draw[step=1cm,thick, color=blue] (q2) -- (p3);
		\draw[step=1cm,thick, color=orange] (p3) -- (right);

		\draw [fill] (left) circle [radius=0.05];
		\draw [fill] (right) circle [radius=0.05];		
		
		\draw [fill] (p1) circle [radius=0.05];		
		\draw [fill] (p2) circle [radius=0.05];	
		\draw [fill] (p3) circle [radius=0.05];	

		\draw [fill] (q1) circle [radius=0.05];		
		\draw [fill] (q2) circle [radius=0.05];	
		
		\pgfmathsetmacro{\vdx}{(\eonex+\etwox+\ethreex+\fonex+\ftwox)*0.5}	
		\pgfmathsetmacro{\vdy}{(\eoney+\etwoy+\ethreey+\foney+\ftwoy)*0.5}

		\path (p1) ++(0.4, -0.5) node {\color{mynicegreen}$\genred{\genextvb}$};
		\path (q1) ++(-0.5, -0.0) node {\color{red}$\genred{\genquotvb}$};
		\path (\vdx,\vdy) ++(0.0, 0.5) node {\color{blue}$\gensubvb$};
		\path (p3) ++(1.0, -0.8) node {\color{orange}$\genquotvb_r$};
\end{tikzpicture}
\hspace{0.3cm}
\begin{tikzpicture}[scale=0.4]
        \pgfmathsetmacro{\textycoordinate}{5}
		\draw[->, line width=0.6pt] (0, \textycoordinate) -- (1.5,\textycoordinate);
		\draw (0,0) circle [radius=0.00];	
\end{tikzpicture}
\hspace{0.3cm}
\begin{tikzpicture}[scale=0.5]
		\pgfmathsetmacro{\rightx}{11}
		\pgfmathsetmacro{\righty}{1}
		
		\pgfmathsetmacro{\eonex}{3}
		\pgfmathsetmacro{\eoney}{3}
		\pgfmathsetmacro{\etwox}{4}
		\pgfmathsetmacro{\etwoy}{1}
		\pgfmathsetmacro{\ethreex}{3}
		\pgfmathsetmacro{\ethreey}{-1}
		
		\pgfmathsetmacro{\fonex}{2}		
		\pgfmathsetmacro{\ftwox}{3.5}
		\pgfmathsetmacro{\ftwoy}{1.5}
		\pgfmathsetmacro{\fthreex}{1}
		\pgfmathsetmacro{\fthreey}{-2}
		
		\pgfmathsetmacro{\pyoffset}{1}
		
		\pgfmathsetmacro{\dslope}{(\eoney+\pyoffset+\ftwoy+\fthreey-\righty)/(\eonex+\ftwox+\fthreex-\rightx)}
		
		\pgfmathsetmacro{\foney}{\eoney+\pyoffset-(\eonex-\fonex)*\dslope}		
				
		\coordinate (left) at (0, 0);
		\coordinate (p1) at (\eonex, \eoney);
		\coordinate (p2) at (\eonex+\etwox, \eoney+\etwoy);
		\coordinate (p3) at (\eonex+\etwox+\ethreex, \eoney+\etwoy+\ethreey);
		\coordinate (right) at (\rightx, \righty);

		\coordinate (q1) at (\fonex, \foney);
		\coordinate (q2) at (\fonex+\ftwox, \foney+\ftwoy);
		
		\coordinate (s1) at (q1);
		\coordinate (s2) at (\eonex, \eoney+\pyoffset);
		\coordinate (s3) at (\eonex+\ftwox, \eoney+\ftwoy+\pyoffset);
		
		\draw[step=1cm,thick, color=mynicegreen] (left) -- (p1) --  (p2) -- (p3);
		\draw[step=1cm,thick, color=orange] (p3) --  (right);
		\draw[step=1cm,thick, color=blue] (s3) -- (p3);
		
		\draw[step=1cm,thick, color=red] (left) -- (q1);
		\draw[step=1cm,thick,dotted, color=red] (q1) -- (q2);
		\draw[step=1cm,thick,dotted, color=blue] (q2) -- (s3);
		
		\draw[step=1cm,thick, color=blue] (s1) -- (s2);
		\draw[step=1cm,thick, color=red] (s2) -- (s3);
		
		\draw [fill] (left) circle [radius=0.05];
		\draw [fill] (right) circle [radius=0.05];		
		
		\draw [fill] (p1) circle [radius=0.05];		
		\draw [fill] (p2) circle [radius=0.05];	
		\draw [fill] (p3) circle [radius=0.05];	

		\draw [fill] (q1) circle [radius=0.05];		
		\draw [fill] (q2) circle [radius=0.05];	
				
		\draw [fill] (s1) circle [radius=0.05];		
		\draw [fill] (s2) circle [radius=0.05];		
		\draw [fill] (s3) circle [radius=0.05];		
\end{tikzpicture}
\caption{Construction of $\genred{\genextvb}$ and $\genred{\genpolygon}$ in the case $\mu(\gensubvb) > \mu(\genquotvb_r)$}\label{construction of aux bundles for the degenerate case}
\end{figure}

It remains to consider the case where we have $\mu(\gensubvb) \leq \mu(\genquotvb_r)$. The readers may refer to Figure \ref{construction of aux bundles for the nondegenerate case} for a visual illustration of our argument. 
By Lemma \ref{minimum slopes of extension-permutations} we find $\mu(\gensubvb) = \mumin(\gensubvb \oplus \genquotvb) \leq \mumin(\genextvb)$. In addition, we have $\rk(\genextvb^{>\mu(\genquotvb_r)}) + \rk(\genquotvb_r) \leq \rk(\genextvb)$ as the surjective map $\genextvb \surj \genquotvb \surj \genquotvb_r$ factors through $\genextvb^{\leq \mu(\genquotvb_r)}$ by Proposition \ref{basic properties of stable bundles}.  
Take $\genred{\genextvb}$ to be the vector bundle on $\schff_{\genlocfield, \FFfield}$ such that $\HN(\genred{\genextvb} \oplus \genquotvb_r)$ is the upper convex hull of $\HN(\genextvb^{>\mu(\genquotvb_r)} \oplus \genquotvb_r)$ and $\HN(\genextvb)$. Let $\gensecred{\genextvb}$ denote the maximal common direct summand of $\genred{\genextvb}$ and $\genextvb$. 
Then we find
\begin{equation}\label{semistable classification E reduction decompositions} 
\genextvb \simeq \gensecred{\genextvb} \oplus \genthirdred{\genextvb} \quad \text{ and } \quad \genred{\genextvb} \simeq \gensecred{\genextvb} \oplus \auxsubvb
\end{equation}
where $\genthirdred{\genextvb}$ and $\auxsubvb$ are vector bundles on $\schff_{\genlocfield, \FFfield}$ with $\auxsubvb$ being semistable. By construction, we have $\HN(\genquotvb_r \oplus \auxsubvb) \geq \HN(\genthirdred{\genextvb})$ and $\mu(\genquotvb_r) \geq \mu(\auxsubvb)$. Hence Proposition \ref{classification of extensions, semistable kernel and cokernel} 
and the decompositions \eqref{semistable classification E reduction decompositions} together yield a short exact sequence
\begin{equation}\label{E-bar as a subbundle of E with minimal semistable quotient}
0 \longrightarrow \genred{\genextvb} \longrightarrow \genextvb \longrightarrow \genquotvb_r \longrightarrow 0.
\end{equation}
Let us now set
$S_{\genred{\genquotvb}}:= \{ i \in S_\genquotvb: \HNslope{\genpolygon}{i} \neq \mu(\genquotvb_r) \}$
and take $\genred{\genpolygon}$ to be the rationally tuplar polygon with
$(\HNslope{\genred{\genpolygon}}{i}) = (\HNslope{\genpolygon}{i})_{i \in S_{\genred{\genquotvb}} \,\cup \,S_\gensubvb}$; in other words, we obtain $\genred{\genpolygon}$ by removing all line segments of slope $\HNslope{\genpolygon}{i}$ from $\genpolygon$. Since $S_\genquotvb \backslash S_{\genred{\genquotvb}}$ does not contain any integer less than or equal to $\rk(\genextvb^{>\mu(\genquotvb_r)})$, the polygons $\genpolygon$ and $\genred{\genpolygon}$ coincide on the interval $[0, \rk(\genextvb^{>\mu(\genquotvb_r)})]$. 
In addition, as $\HN(\genred{\genextvb} \oplus \genquotvb_r)$ is the upper convex hull of $\HN(\genextvb^{>\mu(\genquotvb_r)} \oplus \genquotvb_r)$ and $\HN(\genextvb)$, we find
\[ \mu(\gensubvb) \leq \mumin(\genextvb) \leq \HNslope{\genred{\genextvb}}{i} \leq \mu(\genquotvb_r) = \mumin(\genquotvb) \quad\quad \text{ for } i = \rk(\genextvb^{>\mu(\genquotvb_r)})+1, \cdots, \rk(\genred{\genextvb}).\]
Hence we obtain a $(\gensubvb, \genred{\genextvb} \oplus \genquotvb_r, \genquotvb)$-permutation $\genpolygon'$ of $\HN(\gensubvb \oplus \genquotvb)$ by concatenating $\genred{\genpolygon_1}$, $\HN(\genquotvb_r)$, and $\genred{\genpolygon}_2$, where $\genred{\genpolygon}_1$ and $\genred{\genpolygon}_2$ are 
the rationally tuplar polygons with
\[ (\HNslope{\genred{\genpolygon}_1}{i}) = (\HNslope{\genred{\genpolygon}}{i})_{i \leq \rk(\genextvb^{>\mu(\genquotvb_r)})} \quad \text{ and } \quad  (\HNslope{\genred{\genpolygon}_2}{i}) = (\HNslope{\genred{\genpolygon}}{i})_{i > \rk(\genextvb^{>\mu(\genquotvb_r)})}.\]
It follows that $\genred{\genpolygon}$ is a $(\gensubvb, \genred{\genextvb}, \genred{\genquotvb})$-permutation of $\HN(\gensubvb \oplus \genred{\genquotvb})$; indeed, $\genred{\genpolygon}$ and $\genred{\genextvb}$ are respectively obtained from $\genextvb'$ and $\genred{\genextvb} \oplus \genquotvb_r$ by removing $\HN(\genquotvb_r)$ over the same interval. 
Hence we obtain a short exact sequence
\begin{equation}\label{E-bar as a semistable extension}
0 \longrightarrow \gensubvb \longrightarrow \genred{\genextvb} \longrightarrow \genred{\genquotvb} \longrightarrow 0
\end{equation}
by the induction hypothesis. Now the exact sequences \eqref{E-bar as a subbundle of E with minimal semistable quotient} and \eqref{E-bar as a semistable extension} together yield a commutative diagram of short exact sequences
\begin{equation*}
\begin{tikzcd}
0 \arrow[r]& \gensubvb \ar[equal]{r}\arrow[d, hookrightarrow]& \gensubvb \arrow[r]\arrow[d, "\alpha", hookrightarrow]& 0 \arrow[r]\arrow[d, hookrightarrow]& 0\\
0 \arrow[r]& \genred{\genextvb} \arrow[r]& \genextvb \arrow[r]& \genquotvb_r \arrow[r]& 0
\end{tikzcd}
\end{equation*}
which, by the snake lemma, induces a short exact sequence
\[0 \longrightarrow \genred{\genquotvb} \longrightarrow \coker(\alpha) \longrightarrow \genquotvb_r \longrightarrow 0.\]
Since this sequence is split by Proposition \ref{basic properties of stable bundles}, we obtain a desired exact sequence \eqref{short exact sequence, semistable case}.
\end{proof}

\begin{figure}[H]
\begin{tikzpicture}[scale=0.5]	
		\pgfmathsetmacro{\rightx}{10.5}
		\pgfmathsetmacro{\righty}{2.2}
		
		\pgfmathsetmacro{\eonex}{2}
		\pgfmathsetmacro{\eoney}{4}
		\pgfmathsetmacro{\etwox}{4}
		\pgfmathsetmacro{\etwoy}{1}
		\pgfmathsetmacro{\ethreex}{3}
		\pgfmathsetmacro{\ethreey}{-1}
		
		\pgfmathsetmacro{\fonex}{1.5}		
		\pgfmathsetmacro{\ftwox}{1.7}
		\pgfmathsetmacro{\ftwoy}{1.7}
		\pgfmathsetmacro{\fthreex}{3.5}
		\pgfmathsetmacro{\fthreey}{1.8}
		
		\pgfmathsetmacro{\pyoffset}{0.6}
		
		\pgfmathsetmacro{\dslope}{(\eoney+\pyoffset+\ftwoy+\fthreey-\righty)/(\eonex+\ftwox+\fthreex-\rightx)}
		
		\pgfmathsetmacro{\foney}{\eoney+\pyoffset-(\eonex-\fonex)*\dslope}		
				
		\coordinate (left) at (0, 0);
		\coordinate (p1) at (\eonex, \eoney);
		\coordinate (p2) at (\eonex+\etwox, \eoney+\etwoy);
		\coordinate (p3) at (\eonex+\etwox+\ethreex, \eoney+\etwoy+\ethreey);
		\coordinate (right) at (\rightx, \righty);

		\coordinate (q1) at (\fonex, \foney);
		\coordinate (q2) at (\fonex+\ftwox, \foney+\ftwoy);
		\coordinate (q3) at (\fonex+\ftwox+\fthreex, \foney+\ftwoy+\fthreey);
		
		\coordinate (r1) at (p1);
		\coordinate (r2) at (\eonex+\fthreex, \eoney+\fthreey);
		\coordinate (r3) at (p3);
		
		\draw[step=1cm,thick, color=mynicegreen] (left) -- (p1) --  (p2) -- (p3) -- (right);
		\draw[step=1cm,thick, color=red] (left) -- (q1);
		\draw[step=1cm,thick, color=red] (q1) -- (q2);
		\draw[step=1cm,thick, color=orange] (q2) -- (q3);
		\draw[step=1cm,thick, color=orange] (r1) -- (r2);
		\draw[step=1cm,thick, color=violet] (r2) -- (r3);
		
		\draw[step=1cm,thick, color=blue] (q3) -- (right);
		
		\draw [fill] (left) circle [radius=0.05];
		\draw [fill] (right) circle [radius=0.05];		
		
		\draw [fill] (p1) circle [radius=0.05];		
		\draw [fill] (p2) circle [radius=0.05];	
		\draw [fill] (p3) circle [radius=0.05];	

		\draw [fill] (q1) circle [radius=0.05];		
		\draw [fill] (q2) circle [radius=0.05];	
		\draw [fill] (q3) circle [radius=0.05];	
		
		\draw [fill] (r1) circle [radius=0.05];		
		\draw [fill] (r2) circle [radius=0.05];	
		\draw [fill] (r3) circle [radius=0.05];	
		
		\pgfmathsetmacro{\vdx}{(\fonex+\ftwox+\fthreex+\rightx)*0.5}	
		\pgfmathsetmacro{\vdy}{(\foney+\ftwoy+\fthreey+\righty)*0.5}
		\pgfmathsetmacro{\vfthreex}{\fonex+\ftwox+\fthreex*0.5}
		\pgfmathsetmacro{\vfthreey}{\foney+\ftwoy+\fthreey*0.5}
		\pgfmathsetmacro{\vkx}{\eonex+(\etwox+\ethreex+\fthreex)*0.5}	
		\pgfmathsetmacro{\vky}{\eoney+(\etwoy+\ethreey+\fthreey)*0.5}	

		\path (\vdx, \vdy) ++(0.5, 0.2) node {\color{blue}$\gensubvb$};
		\path (p2) ++(0.0, -0.6) node {\color{mynicegreen}$\genextvb$};
		\path (q1) ++(-0.5, 0.2) node {\color{red}$\genred{\genquotvb}$};
		\path (\vfthreex, \vfthreey) ++(-0.2, 0.5) node {\color{orange}$\genquotvb_r$};
		\path (\vkx, \vky) ++(0.0, 0.5) node {\color{violet}$\auxsubvb$};
\end{tikzpicture}
\hspace{0.3cm}
\begin{tikzpicture}[scale=0.4]
        \pgfmathsetmacro{\textycoordinate}{5}
		\draw[->, line width=0.6pt] (0, \textycoordinate) -- (1.5,\textycoordinate);
		\draw (0,0) circle [radius=0.00];	
\end{tikzpicture}
\hspace{0.3cm}
\begin{tikzpicture}[scale=0.5]
		\pgfmathsetmacro{\rightx}{10.5}
		\pgfmathsetmacro{\righty}{2.2}
		
		\pgfmathsetmacro{\eonex}{2}
		\pgfmathsetmacro{\eoney}{4}
		\pgfmathsetmacro{\etwox}{4}
		\pgfmathsetmacro{\etwoy}{1}
		\pgfmathsetmacro{\ethreex}{3}
		\pgfmathsetmacro{\ethreey}{-1}
		
		\pgfmathsetmacro{\fonex}{1.5}		
		\pgfmathsetmacro{\ftwox}{1.7}
		\pgfmathsetmacro{\ftwoy}{1.7}
		\pgfmathsetmacro{\fthreex}{3.5}
		\pgfmathsetmacro{\fthreey}{1.8}
		
		\pgfmathsetmacro{\pyoffset}{0.6}
		
		\pgfmathsetmacro{\dslope}{(\eoney+\pyoffset+\ftwoy+\fthreey-\righty)/(\eonex+\ftwox+\fthreex-\rightx)}
		
		\pgfmathsetmacro{\foney}{\eoney+\pyoffset-(\eonex-\fonex)*\dslope}		
				
		\coordinate (left) at (0, 0);
		\coordinate (p1) at (\eonex, \eoney);
		\coordinate (p2) at (\eonex+\etwox, \eoney+\etwoy);
		\coordinate (p3) at (\eonex+\etwox+\ethreex, \eoney+\etwoy+\ethreey);
		\coordinate (right) at (\rightx, \righty);

		\coordinate (q1) at (\fonex, \foney);
		\coordinate (q2) at (\fonex+\ftwox, \foney+\ftwoy);
		\coordinate (q3) at (\fonex+\ftwox+\fthreex, \foney+\ftwoy+\fthreey);
		
		\coordinate (r1) at (p1);
		\coordinate (r2) at (\eonex+\fthreex, \eoney+\fthreey);
		\coordinate (r3) at (p3);
		
		\coordinate (s1) at (q1);
		\coordinate (s2) at (\eonex, \eoney+\pyoffset);
		\coordinate (s3) at (\eonex+\fthreex, \eoney+\fthreey+\pyoffset);
		\coordinate (s4) at (\eonex+\fthreex+\ftwox, \eoney+\fthreey+\ftwoy+\pyoffset);
		
		\draw[step=1cm,thick,dotted, color=mynicegreen] (p1) --  (p2) -- (p3);
		
		\draw[step=1cm,thick, color=red] (left) -- (q1);
		\draw[step=1cm,thick,dotted, color=red] (q1) -- (q2);
		\draw[step=1cm,thick,dotted, color=orange] (q2) -- (q3);
		\draw[step=1cm,thick, color=orange] (r1) -- (r2);
		
		\draw[step=1cm,thick, color=mydarkteal] (left) -- (p1);
		\draw[step=1cm,thick, color=mydarkteal] (r2) -- (r3) -- (right);
		\draw[step=1cm,thick, color=orange] (s2) -- (s3);
		\draw[step=1cm,thick, color=red] (s3) -- (s4);
		\draw[step=1cm,thick, color=blue] (s4) -- (right);
		
		\draw[step=1cm,thick,dotted, color=blue] (q3) -- (s4);
		\draw[step=1cm,thick, color=blue] (s1) -- (s2);
		
		\draw [fill] (left) circle [radius=0.05];
		\draw [fill] (right) circle [radius=0.05];		
		
		\draw [fill] (p1) circle [radius=0.05];		
		\draw [fill] (p2) circle [radius=0.05];	
		\draw [fill] (p3) circle [radius=0.05];	

		\draw [fill] (q1) circle [radius=0.05];		
		\draw [fill] (q2) circle [radius=0.05];	
		\draw [fill] (q3) circle [radius=0.05];	
		
		\draw [fill] (r1) circle [radius=0.05];		
		\draw [fill] (r2) circle [radius=0.05];	
		\draw [fill] (r3) circle [radius=0.05];	
				
		\draw [fill] (s1) circle [radius=0.05];		
		\draw [fill] (s2) circle [radius=0.05];		
		\draw [fill] (s3) circle [radius=0.05];		
		\draw [fill] (s4) circle [radius=0.05];			
		
		\path (r3) ++(-0.4, -0.4) node {\color{mydarkteal}$\genred{\genextvb}$};
\end{tikzpicture}
\caption{Construction of $\genred{\genextvb}$ and $\genred{\genpolygon}$ in the case $\mu(\gensubvb) \leq \mu(\genquotvb_r)$}\label{construction of aux bundles for the nondegenerate case}
\end{figure}

\begin{theorem}\label{classification of extensions, general case}
Let $\gensubvb$, $\genextvb$ and $\genquotvb$ be vector bundles on $\schff_{\genlocfield, \FFfield}$. Let $\genquotvb_1, \cdots, \genquotvb_\numslope$ denote the semistable vector bundles on $\schff_{\genlocfield, \FFfield}$ such that $\HN(\genquotvb_i)$ represents the $i$-th line segment in $\HN(\genquotvb)$. 
There exists a short exact sequence 
\begin{equation}\label{short exact sequence, general case}
0 \longrightarrow \gensubvb \longrightarrow \genextvb \longrightarrow \genquotvb \longrightarrow 0
\end{equation}
if and only if the following equivalent conditions are satisfied:
\begin{enumerate}[label=(\roman*)]
\item\label{extension and lift of HN filtration} There exists a filtration
\[ \gensubvb = \genextvb_0 \subset \genextvb_1 \subset \cdots \subset \genextvb_r = \genextvb\]
with $\genextvb_i/\genextvb_{i-1} \simeq \genquotvb_i$ for each $i = 1, \cdots, r$. 
\smallskip

\item\label{extension and sequence of E-permutations} There exists a sequence of vector bundles $\gensubvb = \genextvb_0, \genextvb_1, \cdots, \genextvb_r = \genextvb$ such that the polygon $\HN(\genextvb_{i-1} \oplus \genquotvb_i)$ has an $(\genextvb_{i-1}, \genextvb_i, \genquotvb_i)$-permutation for each $i = 1, \cdots, r$. 
\end{enumerate}
\end{theorem}

\begin{proof}
The equivalence of the conditions \ref{extension and lift of HN filtration} and \ref{extension and sequence of E-permutations} is evident by Theorem \ref{classification of extensions, semistable case}. In addition, the sufficiency part is an immediate consequence of Proposition \ref{basic properties of stable bundles}. For the necessity part, we henceforth assume that there exists an exact sequence \eqref{short exact sequence, general case}. 

Let us proceed by induction on $r$. The assertion is trivial for $r = 0$. For the induction step, we now assume that $r$ is not zero. Let us set
\[ \genred{\genquotvb} :=  \bigoplus_{i = 1}^{\numslope-1} \genquotvb_i.\]
Then we have $\genquotvb \simeq \genred{\genquotvb} \oplus \genquotvb_\numslope$. Take $\genextvb_{r-1}$ to be the kernel of the map $\genextvb \surj \genquotvb \surj \genquotvb_r$. By construction, $\genextvb_{r-1}$ contains $\gensubvb$ as a subsheaf. Hence we get a commutative diagram of short exact sequences
\begin{equation*}
\begin{tikzcd}
0 \arrow[r]& \gensubvb \arrow[r]\arrow[d, hookrightarrow]& \genextvb \arrow[r]\ar[d, equal]& \genquotvb \arrow[r]\arrow[d, twoheadrightarrow]& 0\\
0 \arrow[r]& \genextvb_{r-1} \arrow[r]& \genextvb \arrow[r]& \genquotvb_r \arrow[r]& 0
\end{tikzcd}
\end{equation*}
and consequently find by the snake lemma that the cokernel of the left vertical map is isomorphic to $\genred{\genquotvb}$. The desired assertion now follows by the induction hypothesis. 
\end{proof}

\begin{remark}
We may check the condition \ref{extension and sequence of E-permutations} in a finite amount of time. In fact, if we start with $i = r$ and inductively proceed with descending indices, we get finitely many candidates for each $\genextvb_{i-1}$ just by the following simple observations:
\begin{enumerate}[label=(\alph*)]
\item $\HN(\genextvb_{i-1})$ and $\HN(\genextvb_i)$ coincide on the interval $[0, \rk(\genextvb_i^{> \mu(\genquotvb_i)})]$. 
\smallskip

\item We have $\HN(\genquotvb_i \oplus \genextvb_{i-1}^{\leq \mu(\genquotvb_i)}) \geq \HN(\genextvb_i^{\leq \mu(\genquotvb_i)})$. 
\smallskip

\item All breakpoints in $\HN(\genextvb_{i-1})$ are integer points. 
\end{enumerate}
It is also worthwhile to note that the filtration in the condition \ref{extension and lift of HN filtration} lifts the Harder-Narasimhan filtration of $\genquotvb$. Our proof indeed shows that, for any Harder-Narasimhan category with splitting Harder-Narasimhan filtrations, every extension between two arbitrary objects $\gensubvb$ and $\genquotvb$ gives rise to a filtration that lifts the Harder-Narasimhan filtration of $\genquotvb$. 
\end{remark}

\begin{example}\label{counter example of classification theorem without additional assumption}
We present an example showing that Theorem \ref{classification of extensions, semistable case} does not hold without the semistability assumption on either $\gensubvb$ or $\genquotvb$. 
For ease of notation, we will write $\trivbundle(\lambda) = \trivbundle[\genlocfield, \FFfield](\lambda)$ for each $\lambda \in \Q$. 
Let us take
\[ \gensubvb:= \trivbundle(1/3) \oplus \trivbundle(-1), \quad \genextvb:= \trivbundle(3/2) \oplus \trivbundle(3/7), \quad \genquotvb:= \trivbundle(3) \oplus \trivbundle(3/4).\]
Then it is not hard to verify the following facts:
\begin{enumerate}[label=(\alph*)]
\item $\genextvb_1= \trivbundle(3/2) \oplus \trivbundle^{\oplus 3}$ is the only vector bundle on $\schff_{\genlocfield, \FFfield}$ such that $\HN(\genextvb_1 \oplus \trivbundle(3/4))$ has an $(\genextvb_1, \genextvb, \trivbundle(3/4))$-permutation.
\smallskip

\item $\HN(\gensubvb \oplus \trivbundle(3))$ does not have a $(\gensubvb, \genextvb_1, \trivbundle(3))$-permutation.
\end{enumerate}
Hence Theorem \ref{classification of extensions, general case} implies that there does not exist a short exact sequence
\[0 \longrightarrow \gensubvb \longrightarrow \genextvb \longrightarrow \genquotvb \longrightarrow 0,\]
while $\HN(\gensubvb \oplus \genquotvb)$ has a $(\gensubvb, \genextvb, \genquotvb)$-permutation $\genpolygon$ with
\[ (\HNslope{\genpolygon}{i}) = (3, 1/3, 1/3, 1/3, 3/4, 3/4, 3/4, 3/4, -1)\]
as illustrated in Figure \ref{example with E-permutation and no extensions}. 
\begin{figure}[H]
\begin{tikzpicture}[scale=0.5]	
		\coordinate (left) at (0, 0);
		\coordinate (p1) at (2, 3);
		\coordinate (right) at (9, 6);

		\coordinate (q1) at (1, 3);
		\coordinate (q2) at (5, 6);
		\coordinate (q3) at (8, 7);
				
		\draw[step=1cm,thick, color=mynicegreen] (left) -- (p1) -- (right);
		\draw[step=1cm,thick, color=red] (left) -- (q1) -- (q2);
		\draw[step=1cm,thick, color=blue] (q2) -- (q3) -- (right);

		\draw [fill] (q1) circle [radius=0.05];		
		\draw [fill] (q2) circle [radius=0.05];		
		\draw [fill] (q3) circle [radius=0.05];			
		\draw [fill] (left) circle [radius=0.05];
		\draw [fill] (right) circle [radius=0.05];		
		
		\draw [fill] (p1) circle [radius=0.05];		
		
		\path (q3) ++(0.2, 0.6) node {\color{blue}$\gensubvb$};
		\path (p1) ++(2.2, 0.4) node {\color{mynicegreen}$\genextvb$};
		\path (q1) ++(-0.5, 0.2) node {\color{red}$\genquotvb$};
\end{tikzpicture}
\hspace{0.3cm}
\begin{tikzpicture}[scale=0.4]
        \pgfmathsetmacro{\textycoordinate}{5}
		\draw[->, line width=0.6pt] (0, \textycoordinate) -- (1.5,\textycoordinate);
		\draw (0,0) circle [radius=0.00];	
\end{tikzpicture}
\hspace{0.3cm}
\begin{tikzpicture}[scale=0.5]
		\coordinate (left) at (0, 0);
		\coordinate (p1) at (2, 3);
		\coordinate (right) at (9, 6);

		\coordinate (q1) at (1, 3);
		\coordinate (q2) at (5, 6);
		\coordinate (q3) at (8, 7);
		
		\coordinate (r1) at (q1);
		\coordinate (r2) at (4, 4);
		\coordinate (r3) at (8, 7);
		\coordinate (r4) at (right);
				
		\draw[step=1cm,thick, color=mynicegreen] (left) -- (p1) -- (right);
		\draw[step=1cm,thick, color=red] (left) -- (q1);
		\draw[step=1cm,thick,dotted, color=red] (q1) -- (q2);
		\draw[step=1cm,thick,dotted, color=blue] (q2) -- (q3);
		\draw[step=1cm,thick, color=blue] (q3) -- (right);
		
		\draw[step=1cm,thick, color=blue] (r1) -- (r2);
		\draw[step=1cm,thick, color=red] (r2) -- (r3);
		\draw[step=1cm,thick, color=blue] (r3) -- (r4);

		\draw [fill] (q1) circle [radius=0.05];		
		\draw [fill] (q2) circle [radius=0.05];		
		\draw [fill] (q3) circle [radius=0.05];			
		\draw [fill] (left) circle [radius=0.05];
		\draw [fill] (right) circle [radius=0.05];		
		
		\draw [fill] (p1) circle [radius=0.05];		

		\draw [fill] (r1) circle [radius=0.05];		
		\draw [fill] (r2) circle [radius=0.05];		
		\draw [fill] (r3) circle [radius=0.05];		
		\draw [fill] (r4) circle [radius=0.05];			
		
\end{tikzpicture}
\caption{A $(\gensubvb, \genextvb, \genquotvb)$-permutation of $\HN(\gensubvb \oplus \genquotvb)$ in Example \ref{counter example of classification theorem without additional assumption}}\label{example with E-permutation and no extensions}
\end{figure}
\end{example}

\begin{remark}
Example \ref{counter example of classification theorem without additional assumption} suggests that the condition \ref{extension and sequence of E-permutations} in Theorem \ref{classification of extensions, general case} is unlikely to have an equivalent statement which is easy to check in the general case. On the other hand, such a statement exists under some additional assumptions. 
\begin{enumerate}[label=(\arabic*)]
\item If all slopes in $\HN(\gensubvb)$, $\HN(\genextvb)$, and $\HN(\genquotvb)$ are integers, then the condition \ref{extension and sequence of E-permutations} in Theorem \ref{classification of extensions, general case} is satisfied if and only if $\HN(\gensubvb \oplus \genquotvb)$ has a $(\gensubvb, \genextvb, \genquotvb)$-permutation $\genpolygon$. 
\smallskip

\item If $\genextvb$ is semistable, then the condition \ref{extension and sequence of E-permutations} in Theorem \ref{classification of extensions, general case} is satisfied if and only if we have $\mumax(\gensubvb) \leq \mu(\genextvb) \leq \mumin(\genquotvb)$.
\end{enumerate}
The first statement can be proved by an induction argument similar to the proof of Theorem \ref{classification of extensions, semistable case}. The second statement immediately follows from the previous result of the author \cite[Theorem 1.1.1]{Hong_extvbss}. We also note that the first statement is comparable to the main result of Schlesinger \cite{Schlesinger_extvbonP1} which classifies all extension of two given vector bundles on $\PP^1$. 
\end{remark}

\bibliographystyle{amsalpha}

\bibliography{Bibliography}
	
\end{document}